\title{Numerical Approximation of  Optimal Convex Shapes in $\mathbb{R}^3$}
\author{S\"oren Bartels\footnote{Universität Freiburg, Hermann-Herder-Str. 10, 79104, Freiburg, bartels@mathematik.uni-freiburg.de},\ Hedwig Keller\footnote{Corresponding Author, Universität Freiburg, Hermann-Herder-Str. 10, 79104, Freiburg,hedwig.keller@mathematik.uni-freiburg.de} \ and Gerd Wachsmuth\footnote{BTU Cottbus-Senftenberg, Postfach 10 13 44, 03013, Cottbus, gerd.wachsmuth@b-tu.de}}
\begin{document}

\maketitle
\selectlanguage{english}
%*****************************************************************************
\pagenumbering{arabic}

\abstract{In the optimization of convex domains under a PDE constraint numerical difficulties arise in the approximation of convex domains in $\mathbb{R}^3$. Previous research used a  restriction to rotationally symmetric domains  to reduce shape optimization problems to a two-dimensional setting, \cite{keller}. 
In the current research, two approaches for the approximation in $\mathbb{R}^3$ are considered. First, a notion of discrete convexity allows for a  nearly convex approximation with polyhedral domains.
An alternative  approach is based on the  recent observation that higher order finite elements can approximate convex functions conformally, \cite{wachsmuth}.
 As a second approach these results are used to approximate optimal convex domains with isoparametric convex domains.
The proposed algorithms were tested on shape optimization problems constrained by a Poisson equation and both algorithms achieved similar results.}

\paragraph{Acknowledgment:} \ \\This article will appear as part of a collection of articles of the work achieved by the Priority Program SPP 1962 "Non-smooth and Complementarity-based Distributed Parameter Systems: Simulation and Hierarchical Optimization" and summarizes the research within the Project P1 "Approximation of Non-Smooth Optimal Convex Shapes with Applications in Optimal Insulation and Minimal Resistance". Support by DFG grants BA2268/4-2 within the Priority Program SPP 1962 is gratefully acknowledged.

\section{Introduction}

Solvability of shape optimization problems relies, among other factors, on
suitable constraints on the geometry of the admissible domains.
Since we minimize over shapes, no topology is readily available. The
restriction to classes of convex domains appears attractive, since the
compactness results available for convex domains let us avoid more general
topological frameworks. 
For corresponding analytical details we refer to
\cite{BD03,variationalmethods,  BG, veodf, vangoethem, yang09}.
Therefore, we restrict the shape optimization to  open, convex and bounded domains. 
 
The numerical approximation of convex domains is difficult in higher
 dimensions. Indeed, for conformal P1 finite elements we can not guarantee that  a convex function can be approximated consistently (cf. \cite{chone}), and with simple  examples we can show that the nodal interpolant of a convex function is not  necessarily convex itself, for such an example see \cite[Figure 2.1]{aguilera}.
 To approximate convex functions  we need for example higher order conforming finite elements, \cite{wachsmuth}, a weaker definition for convexity tailored to finite  elements, \cite{aguilera}, a geometric approach as in  \cite{convexhull} or a spherical harmonic decomposition, \cite{antunes}.
 Since the approximation of convex domains in $\mathbb{R}^3$
 has certain similarities to the approximation  of convex functions in $\mathbb{R}^2$,  we expect related difficulties. We focus on the framework of \cite{BW18}, which approximates optimal convex domains in $\mathbb{R}^2$ and gives an example for the numerical difficulties in higher dimensions. 
 
In previous research, \cite{keller}, a restriction to   a class of rotationally symmetric domains in $\mathbb{R}^3$ allowed us to reduce the problem to a two-dimensional setting, for which the boundary is a convex curve. The restriction to rotational symmetric domains is in general not justified and the impact of this restriction is difficult to evaluate. 
Therefore, we propose two approaches to deal with the convexity constraint for domains in $\mathbb{R}^3$ with less geometric restrictions.

We cannot expect a conforming approximation of convex domains with an affine tetrahedralization in $\mathbb{R}^3$. We therefore define a relaxed convexity condition, the \textit{discrete convexity}, tailored to P1 finite elements, based on \cite{CL_nonconf}.  A similar method was recently studied in \cite{rieger}, by considering a Galerkin approach to the approximation of convex domains. The results however are more abstract and less easy to adapt to PDE-constraint shape optimization problems such as the one considered in our research.
From \cite{wachsmuth} we know that we can approximate convex functions conformally with higher order finite elements. This motivates our second approach, for which we approximate optimal convex domains with convex isoparametric domains with a piecewise quadratic boundary.

We are interested in the optimization under a PDE constraint.  For a more general background on shape optimization we refer to \cite{variationalmethods,veodf}. As in \cite{BW18} we consider optimization problems in which the state equation is the Poisson equation and search for a solution of the following problem:
\begin{align*}
\begin{cases}
\text{ Minimize} & \ \ \ \int_{\Omega}j(x,u(x),\nabla u(x))\, \text{d} x \\ \text{ w.r.t.} & \ \ \ \Omega \subset \mathbb{R}^3, u \in H_0^1(\Omega) \\ \text{ s.t.} & \ \ \ - \Delta u = f \text{ in } \Omega \text{ and } \Omega \subset Q \text{ convex and open.}
\end{cases}
\end{align*}
Here, $Q \subset \mathbb{R}^3$ is bounded, open and convex, $j : Q \times \mathbb{R} \times \mathbb{R}^3 \rightarrow \mathbb{R}$ a suitable Carath\'{e}odory  function and $f \in L^2(Q)$. The numerical approximation of  this problem in $\mathbb{R}^2$, as well as a proof of existence is covered in \cite{BW18}.
In Section \ref{p1:sec:non_conf} a discrete convexity constraint is proposed and a shape optimization algorithm is implemented for affine tetrahedralizations. The following Section \ref{p1:sec:chd} covers a conformal approximation of convex domain with isoparametric domains. Lastly, in Section \ref{p1:sec:conclusion} we compare the approaches and  discuss their limitations.

\section{Discrete polyhedral convexity}\label{p1:sec:non_conf}

It is well known that we cannot expect that a convex function can be approximated by convex piecewise node patchs. 
In \cite{CL_nonconf} an external approach is suggested instead. The idea is to approximate convex functions with continuous piecewise affine linear functions, but with the  nodal interpolants of convex functions instead of convex P1 functions.
This motivates us to define a relaxed discrete convexity for polyhedral domains.

\subsection{Discrete convexity}

A natural way to adapt the results of \cite{CL_nonconf} to convex domains is the following definition.

\begin{definition}\label{p1:def:discrete_convex_domain}\rm
Let $\Omega_h \subset \mathbb{R}^3$ be a polyhedral bounded domain with a triangulation $\mathcal{T}_h$ with maximal mesh size $h$. Then $\Omega_h$ is \textit{discrete convex} if $\partial \Omega_h$ is a piecewise linear Lagrange interpolant of the boundary $\partial \Omega$ of a convex domain $\Omega$.
\end{definition}
For details on the construction of an affine linear approximation of the boundary, see for example \cite{ciarlet,dziukelliot}. Using the minimality of the convex hull an equivalent defintion can be derived, which is more convenient for implementation purposes. 
\begin{definition}\rm\label{p1:def:discrete_convex_domain_support}
Let $\Omega_h \subset \mathbb{R}^3$ be a polyhedral bounded domain with a triangulation $\mathcal{T}_h$ with maximal mesh size $h$. We say that $\Omega_h$ is \textit{discrete convex}, if every boundary vertex $z \in \partial \mathcal{N}_h = \mathcal{N}_h \cap \partial \Omega_h$ has a support plane for $\Omega_h$.
\end{definition}

\begin{figure}[b]
\begin{center}
	\includegraphics[trim = {1cm 1cm 1cm 1cm}, height =3.5cm]{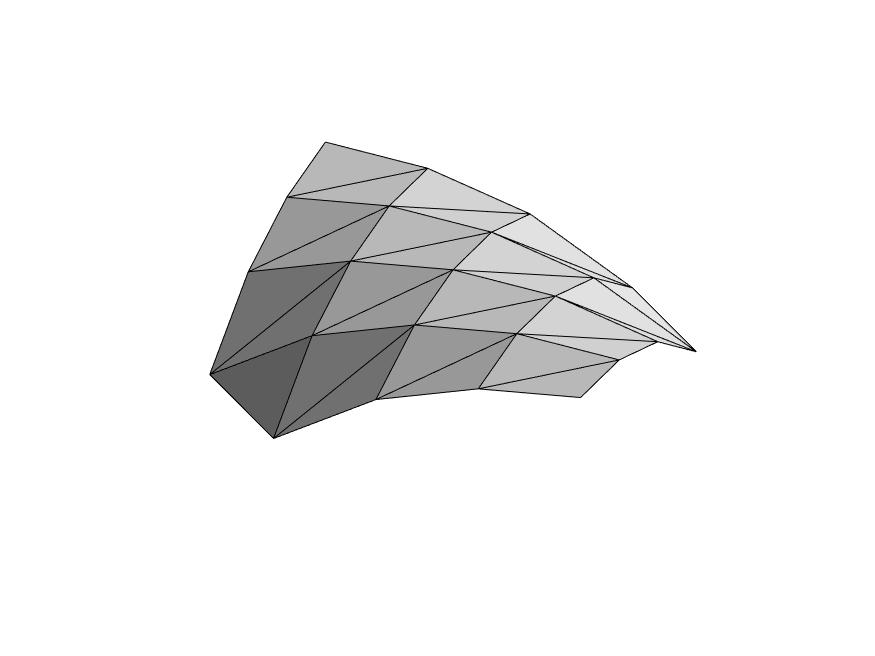}
     \includegraphics[trim={3cm 4cm 6cm 2cm},height = 3.5cm]{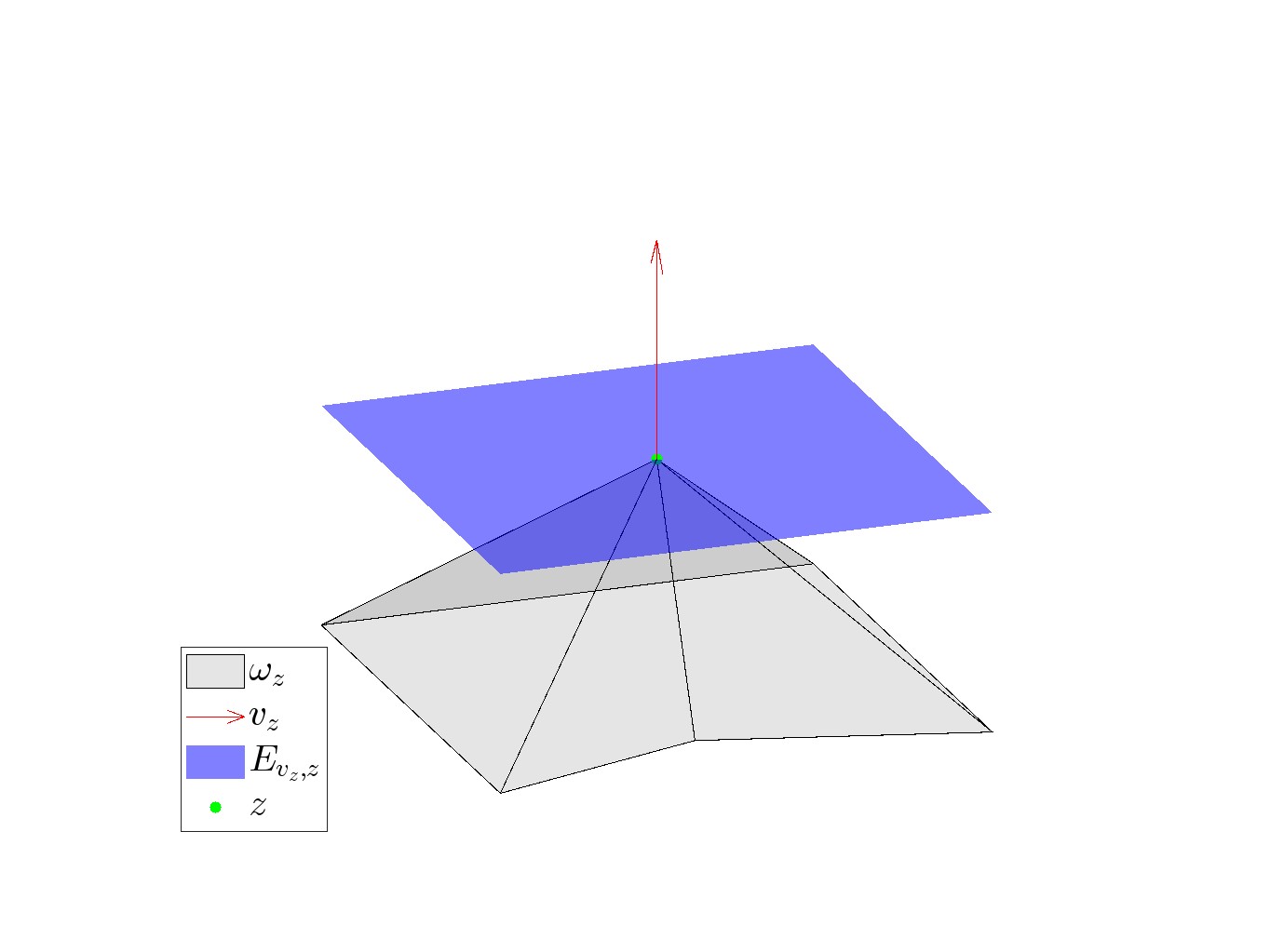}
  \caption{Example for Definition \ref{p1:def:discrete_convex_domain} (left), a boundary segment of discrete convex approximation of a cylinder, and of Definition \ref{p1:def:discrete_convex_domain_support} (right): a boundary segment $\omega_z$ has a support plane $E_{v_z,z} = \{x \in \mathbb{R}^3: (z-x)\cdot v_z = 0\}$ at vertex $z$, defined by a normal vector $v_z$} \label{p1:fig:dc_support_sketch}
  \end{center}
		
     \end{figure}  
Examples for both Definitions \ref{p1:def:discrete_convex_domain} and \ref{p1:def:discrete_convex_domain_support} are shown in Figure \ref{p1:fig:dc_support_sketch}.
In general, the polyhedral domain interpolating the boundary of a convex domain is not convex. Instead, by definition it is discrete convex. On the other hand, a discrete convex domain always interpolates a polyhedral convex domain, for example its convex hull.

\begin{corollary}\label{p1:corollary:ldc:dist_convhull}
For every discrete convex domain $\Omega_h$  there exists a convex domain $\Omega$ such that $\partial \mathcal{N}_h \subset \partial \Omega$ and $\vert \chi_{\Omega_h} - \chi_{\Omega} \vert \le c\vert \partial \Omega \vert h$.
\end{corollary}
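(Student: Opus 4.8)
The plan is to construct the convex domain $\Omega$ explicitly as the convex hull of the discrete convex domain $\Omega_h$ and then estimate the symmetric difference between the two.
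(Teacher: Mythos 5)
Your choice of $\Omega = \text{conv}(\Omega_h)$ is exactly the construction the paper uses, so the strategy coincides with the paper's; as written, however, your proposal is only a statement of that strategy, and both substantive steps of the argument are missing. First, the corollary asserts $\partial \mathcal{N}_h \subset \partial \Omega$, and this is the one place where discrete convexity actually enters: for a general polyhedral domain a boundary vertex can lie strictly inside the convex hull, and the claim would be false. You need to argue that, by Definition \ref{p1:def:discrete_convex_domain_support}, every boundary vertex $z \in \partial \mathcal{N}_h$ has a support plane for $\Omega_h$; since any halfspace containing $\Omega_h$ also contains $\text{conv}(\Omega_h)$, that same plane is a support plane for $\Omega$ passing through $z$, which forces $z \in \partial \Omega$ (a point of $\overline{\Omega}$ through which a support plane passes cannot be an interior point).

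Second, ``estimate the symmetric difference'' must be carried out quantitatively to obtain the stated bound $c \vert \partial \Omega \vert h$. Since $\Omega_h \subset \Omega$, the symmetric difference reduces to $\vert \Omega \setminus \Omega_h \vert$, and the paper bounds it by a layer-type estimate for convex bodies,
\begin{equation*}
\vert \Omega \setminus \Omega_h \vert \le c\, \vert \partial \Omega \vert \sup_{x \in \partial \Omega_h} \text{dist}(x, \partial \Omega),
\end{equation*}
after which the supremum is controlled by the mesh size: any $x \in \partial \Omega_h$ lies in a boundary face whose vertices belong to $\partial \mathcal{N}_h \subset \partial \Omega$, hence $\text{dist}(x,\partial \Omega) \le \inf_{z \in \partial \mathcal{N}_h} \vert x - z \vert \le h$. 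Without these two steps --- the support-plane argument showing the hull is interpolated, and the distance-to-boundary bound --- the proposal does not yet yield either conclusion of the corollary.
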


\begin{proof}
We can show that a discrete convex domain interpolates its convex hull $\Omega$.
Since $\Omega_h \subset \Omega$  we have $\vert \Omega \backslash \Omega_h \vert \le c\vert \partial \Omega \vert \sup_{x \in \partial \Omega_h} \text{dist}(x,\partial \Omega)$.  But, dist$(x,\partial \Omega) = \inf_{y \in \partial \Omega} \vert x-y\vert \le \inf_{z \in \partial \mathcal{N}_h} \vert x-z \vert \le h$, for the maximal mesh size $h$ of the triangulation $\mathcal{T}_h.$ 
This implies the asserted estimate.
\end{proof}

Standard results on boundary approximation, cf. \cite[Section 3.6.]{dziuk} imply the following proposition.

\begin{proposition}
For $\Omega \subset \mathbb{R}^3$ convex, open and bounded with a piecewise $C^2$-boundary there exists a sequence of discrete convex domains $\Omega_h\subset \Omega$ with uniformly shape regular triangulations with maximal mesh size $h$ such that $\Omega_h \Delta \Omega \subset \{x \in \mathbb{R}^3: \textup{dist}(x,\partial \Omega) \le ch^2 \}.$ Especially we have $\chi_{\Omega_h} \rightarrow \chi_\Omega$ in $L^1$.
\end{proposition}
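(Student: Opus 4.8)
The plan is to realize the claimed sequence as the standard inscribed interpolating polyhedra and then to verify, in turn, discrete convexity, the inclusion $\Omega_h \subset \Omega$, and the $O(h^2)$ bound on the symmetric difference; the $L^1$ convergence follows at once. First I would fix a family of uniformly shape regular triangulations $\mathcal{T}_h$ of mesh size $h$ whose boundary nodes $\partial\mathcal{N}_h$ lie on $\partial\Omega$ and which resolve the singular set of $\partial\Omega$ (the finitely many $C^2$ curves and corners along which the smooth pieces meet) by edges and vertices of $\mathcal{T}_h$. Such boundary-fitted families exist for piecewise $C^2$ domains, cf. \cite[Section 3.6]{dziuk}. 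Letting $\Omega_h$ be the polyhedral domain whose boundary is the piecewise linear Lagrange interpolant of $\partial\Omega$ through these nodes, Definition~\ref{p1:def:discrete_convex_domain} makes $\Omega_h$ discrete convex by construction, since $\Omega$ is convex.

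Second, I would check $\Omega_h \subset \Omega$: each boundary simplex of $\partial\Omega_h$ has all of its vertices on $\partial\Omega \subset \overline{\Omega}$, so by convexity of $\Omega$ the simplex lies in $\overline{\Omega}$; hence the inscribed polyhedron satisfies $\Omega_h \subset \Omega$ and the symmetric difference reduces to $\Omega \setminus \Omega_h$. The heart of the argument is then the boundary-approximation estimate. On each $C^2$ piece I would represent $\partial\Omega$ locally as a graph $g$ over a tangent plane and invoke the classical linear interpolation bound $\|g - I_h g\|_{L^\infty} \le c\, h^2 \|D^2 g\|_{L^\infty}$, which yields $\mathrm{dist}(x,\partial\Omega) \le c\, h^2$ for every $x \in \partial\Omega_h$. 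Collecting these local bounds gives $\partial\Omega_h \subset \{x : \mathrm{dist}(x,\partial\Omega) \le c\, h^2\}$, and together with $\Omega_h \subset \Omega$ this places $\Omega \Delta \Omega_h$ in the tubular neighborhood of width $O(h^2)$ asserted in the statement.

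Finally, estimating the volume of this neighborhood by $c\,\vert\partial\Omega\vert\, h^2$ (as in the proof of Corollary~\ref{p1:corollary:ldc:dist_convhull}, now with the improved power of $h$) gives $\|\chi_{\Omega_h} - \chi_\Omega\|_{L^1} = \vert \Omega \setminus \Omega_h \vert \le c\,\vert\partial\Omega\vert\, h^2 \to 0$ as $h \to 0$, which is the claimed $L^1$ convergence.

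The main obstacle I anticipate is the uniformity of the $O(h^2)$ estimate across the singular set while keeping the triangulations uniformly shape regular. On the smooth faces the interpolation constant is governed by the principal curvatures and the estimate is classical; near the edges and corners, however, the graph representation degenerates, so one must exploit that the mesh is fitted to the singular curves (whose chordal approximation is itself $O(h^2)$) and that convexity keeps the interpolant inscribed, and then verify that neither the shape-regularity constant nor the interpolation constant $c$ deteriorates there. This patching along the non-smooth part of $\partial\Omega$, rather than the estimate on the smooth pieces, is where the real work lies.
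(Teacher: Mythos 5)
Your proposal is correct and takes essentially the same route as the paper: the paper gives no explicit proof, simply invoking the standard boundary-approximation results of \cite[Section 3.6.]{dziuk}, which are exactly the inscribed interpolating polyhedron and the $O(h^2)$ interpolation estimates that you spell out. The additional care you devote to the singular set of the piecewise $C^2$ boundary is a sensible elaboration of detail that the paper leaves entirely to the cited reference.
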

In general, we have the following result.
\begin{corollary}
For $\Omega  \subset Q \subset \mathbb{R}^3$ convex, open and bounded there exists a sequence of discrete convex domains $\Omega_h\subset \Omega$ with uniformly shape regular triangulations with maximal mesh size $h$ such that $\chi_{\Omega_h} \rightarrow \chi_\Omega$ in $L^1(Q)$.
\end{corollary}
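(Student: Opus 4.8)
The plan is to deduce the general statement from the preceding proposition by interior approximation with convex \emph{polytopes}, which have a piecewise flat and hence piecewise $C^2$ boundary. First I would fix a countable dense subset $D\subset\Omega$ and, for $n\in\mathbb{N}$, let $P_n$ be the interior of the convex hull of the first $n$ points of $D$. Since $\Omega$ is open and convex and $D$ is dense, every $x\in\Omega$ is an interior point and can be written as a convex combination of two points of $D$ lying in a small ball around $x$; hence $\bigcup_n P_n=\Omega$, the $P_n$ may be taken increasing (discarding the lower-dimensional initial ones), and by continuity of the Lebesgue measure on the bounded set $Q$ we get $|\Omega\setminus P_n|\to 0$, i.e.\ $\chi_{P_n}\to\chi_\Omega$ in $L^1(Q)$. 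Each $P_n$ is a bounded convex polyhedral domain contained in $\Omega$.

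Second, I would triangulate each polytope. Because $P_n$ is convex, every boundary point -- in particular every boundary vertex of a triangulation whose boundary nodes lie on $\partial P_n$ -- admits a supporting hyperplane of $P_n$. By Definition \ref{p1:def:discrete_convex_domain_support} the triangulated polytope is therefore automatically discrete convex, with no geometric boundary error, so that $\chi_{\Omega_h}=\chi_{P_n}$ for the corresponding discrete domain. (Alternatively, since $\partial P_n$ is piecewise $C^2$ one may apply the preceding proposition to $P_n$ directly.) Thus for every fixed $n$ we obtain discrete convex domains $\Omega_h\subset P_n\subset\Omega$ with shape regular triangulations of arbitrarily small maximal mesh size $h$.

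Third, a diagonal argument assembles the desired sequence: choosing for each $n$ a mesh size $h_n\to 0$ and the associated discrete convex domain $\Omega_{h_n}$, the triangle inequality
\[
\|\chi_{\Omega_{h_n}}-\chi_\Omega\|_{L^1(Q)}\le\|\chi_{\Omega_{h_n}}-\chi_{P_n}\|_{L^1(Q)}+\|\chi_{P_n}-\chi_\Omega\|_{L^1(Q)}
\]
shows $\chi_{\Omega_{h_n}}\to\chi_\Omega$ in $L^1(Q)$ after relabelling $h\leftarrow h_n$.

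The step I expect to be the main obstacle is the requirement that the triangulations be \emph{uniformly} shape regular along the whole sequence. For a fixed polytope, shape regular triangulations of vanishing mesh size certainly exist, but as $n\to\infty$ the polytopes $P_n$ degenerate towards $\partial\Omega$: their faces may become small and their dihedral angles may tend to $0$ or $\pi$, so that the shape regularity constant furnished by a naive triangulation of $P_n$ need not be bounded in $n$. To control this I would not triangulate the $P_n$ independently but choose the inscribed polytopes so that the boundary faces are comparable in size to the local mesh size (a uniform boundary approximation as in the construction behind the proposition, cf.\ \cite{dziuk}), and build the mesh from a globally shape regular background triangulation modified only in a boundary layer; keeping the modification constants independent of $n$ while preserving the inclusion $\Omega_h\subset\Omega$ is the delicate point that must be verified.
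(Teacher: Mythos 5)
The paper offers no written proof of this corollary: it is stated as the ``general'' counterpart of the preceding proposition, the implicit argument being an inner approximation of an arbitrary bounded convex domain by nicer convex domains, combined with a diagonal selection. Your proposal fills in precisely this implicit argument, and its substance is correct: the inner polytopes $P_n$ with $\chi_{P_n}\to\chi_\Omega$ in $L^1(Q)$ exist (one small slip: to conclude that $x$ is an \emph{interior} point of $P_n$ you should place $x$ inside the convex hull of four affinely independent points of $D$ near $x$; a convex combination of two points only gives $x\in P_n$ up to closure); an exact tetrahedralization of a convex polytope (cone from an interior point over the triangulated facets) is discrete convex in the sense of Definition \ref{p1:def:discrete_convex_domain_support}, since every boundary point of a convex set has a support plane; and the triangle inequality then yields the claimed $L^1(Q)$ convergence. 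Your parenthetical alternative --- applying the preceding proposition to $P_n$, whose boundary is piecewise flat and hence piecewise $C^2$ --- is equally consistent with what the authors intend.

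You are also right that the only genuine mathematical content beyond the proposition is the \emph{uniformity} of the shape regularity constant along the diagonal sequence, and this concern is not hypothetical: nothing prevents the convex hull of the first $n$ points of a dense set from having sliver facets, and a thin triangular facet cannot be partitioned into fat triangles (any sub-triangle containing one of its small angles inherits an angle at most that small), so an exact conforming tetrahedralization of such a $P_n$ necessarily contains degenerate elements. Hence the inner polytopes must be constructed, not taken arbitrarily. The standard repair is the one you sketch: either build inscribed polytopes whose facets are shape regular and comparable to the mesh size (a shape regular surface mesh interpolating $\partial\Omega$ from inside, as in the construction behind the proposition), or approximate $\Omega$ from inside by smooth convex bodies and note that for a fixed $C^2$ boundary the proposition's meshes can be built, for all $h$ below a domain-dependent threshold, with a shape regularity constant depending only on the dimension (at scales far below the curvature radius the boundary is nearly flat); the diagonal sequence then inherits this single constant. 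With that verification supplied your proof is complete --- and it is exactly the verification that the paper itself glosses over by giving no proof at all.
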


For the shape optimization, the compactness results for convex domains is especially relevant. A similar result can be derived for the approximation with discrete convex domains.
\begin{corollary}[Compactness]Let $(\Omega_h)_{h >0}$ be a sequence of discrete convex domains with quasiuniform triangulations $(\mathcal{T}_h)_{h>0}$ such that $\Omega_h\subset Q\subset \mathbb{R}^3$, with $Q$ convex, bounded and open. There exists a convex domain $\Omega$, such that for a subsequence $\chi_{\Omega_h} \rightarrow \chi_{\Omega}$ in $L^1(Q)$ for $h \rightarrow 0$. \label{p1:thm:compactness_nonconf}
\end{corollary}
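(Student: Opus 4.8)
The plan is to reduce the statement to the classical compactness theorem for convex bodies by replacing each discrete convex domain with a genuinely convex one via Corollary \ref{p1:corollary:ldc:dist_convhull}, and then to control the replacement error uniformly in $h$.

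First I would, for every $h>0$, let $\tilde\Omega_h$ denote the convex hull of $\Omega_h$. By Corollary \ref{p1:corollary:ldc:dist_convhull} this is a convex domain with $\Omega_h\subset\tilde\Omega_h$ and $\partial\mathcal{N}_h\subset\partial\tilde\Omega_h$, and since $Q$ is convex and $\Omega_h\subset Q$ we also obtain $\tilde\Omega_h\subset Q$. The corollary further yields
\[
\|\chi_{\Omega_h}-\chi_{\tilde\Omega_h}\|_{L^1(Q)}=|\tilde\Omega_h\setminus\Omega_h|\le c\,|\partial\tilde\Omega_h|\,h .
\]
Because the $\tilde\Omega_h$ are convex and nested in the bounded convex set $Q$, monotonicity of the surface area under inclusion of convex sets gives the uniform bound $|\partial\tilde\Omega_h|\le|\partial Q|=:C<\infty$. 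Hence $\|\chi_{\Omega_h}-\chi_{\tilde\Omega_h}\|_{L^1(Q)}\le cC\,h\to0$ as $h\to0$.

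Next I would invoke the compactness of convex sets directly on the convex hulls. The family $(\tilde\Omega_h)_{h>0}$ consists of convex subsets of the bounded set $Q$, so by the Blaschke selection theorem a subsequence converges in the Hausdorff metric to a convex set $\Omega$, and Hausdorff convergence of uniformly bounded convex sets implies $\chi_{\tilde\Omega_h}\to\chi_\Omega$ in $L^1(Q)$ along that subsequence. Combining this with the triangle inequality and the bound above,
\[
\|\chi_{\Omega_h}-\chi_\Omega\|_{L^1(Q)}\le\|\chi_{\Omega_h}-\chi_{\tilde\Omega_h}\|_{L^1(Q)}+\|\chi_{\tilde\Omega_h}-\chi_\Omega\|_{L^1(Q)}\longrightarrow0,
\]
which is the asserted convergence.

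The main obstacle is the passage from Hausdorff convergence of the convex hulls to $L^1$ convergence of their characteristic functions, together with the verification that the limit $\Omega$ is again a genuine convex domain rather than a lower-dimensional set: if the limiting convex set has empty interior its characteristic function vanishes almost everywhere and the statement degenerates, so one must either rule this out under the hypotheses or interpret $\Omega=\emptyset$ as admissible. The uniform perimeter bound, which makes the discrete-to-convex error tend to zero, relies on the constant $c$ in Corollary \ref{p1:corollary:ldc:dist_convhull} being independent of $h$; here the quasiuniformity of the triangulations $(\mathcal{T}_h)_{h>0}$ enters, and confirming this uniformity is the remaining routine but essential point.
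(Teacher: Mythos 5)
Your proposal follows essentially the same route as the paper's own proof: replace each $\Omega_h$ by its convex hull $\widetilde\Omega_h\subset Q$, invoke the classical compactness of convex domains (the paper cites this directly in $L^1$ form, while you derive it via Blaschke selection plus Hausdorff-to-$L^1$ convergence), and control $\Vert\chi_{\Omega_h}-\chi_{\widetilde\Omega_h}\Vert_{L^1(Q)}\le c\vert\partial\widetilde\Omega_h\vert h\le c\vert\partial Q\vert h$ using Corollary \ref{p1:corollary:ldc:dist_convhull} and the monotonicity of perimeter for nested convex sets. The caveats you raise (degenerate limits, $h$-independence of the constant) are not addressed in the paper's proof either, so your argument matches it in both structure and level of rigor.
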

\begin{proof}
The domains $\widetilde{\Omega}_h = \text{conv}(\Omega_h)$  define a sequence of bounded, convex domains. After passing to a subsequence, there exists a convex domain $\Omega$, s.t. $\chi_{\widetilde{\Omega}_h} \rightarrow \chi_{\Omega}$ in $L^1(Q)$, see \cite{BG97}. 
We have that
\begin{equation*}
\Vert \chi_{\Omega_h} - \chi_{\Omega} \Vert_{L^1(Q)} \le  \Vert \chi_{\Omega_h} - \chi_{\widetilde{\Omega}_h} \Vert_{L^1(Q)}  +\Vert \chi_{\widetilde{\Omega}_h} - \chi_{\Omega} \Vert_{L^1(Q)}  \rightarrow 0.
\end{equation*}
The second term goes to zero due the convergence results for convex domains. For the first term we have with the estimate of Corollary \ref{p1:corollary:ldc:dist_convhull} and the quasiuniform triangulatinos that 
\begin{equation*}
\Vert \chi_{\Omega_h} - \chi_{\widetilde{\Omega}_h} \Vert_{L^1(Q)} \le C\vert \partial \widetilde{\Omega}_h \vert h \le C\vert \partial Q\vert h \rightarrow 0,
\end{equation*}
since for two convex domains $A \subset B$ holds that $\vert \partial A \vert \le \vert \partial B \vert$. 
\end{proof}

The strength of a conformal approximation of convex domains are the convergence results regarding the boundary, like the compactness of the characteristic functions with respect to the convergence in variation. For the discrete convex domain, we only approximate the volume, such that we only gain convergence with respect to the $L^1$-norm. 
For now we only consider shape optimization problems constrained by a Poisson equation, for which this convergence is sufficient.

\subsection{Shape optimization for the Poisson problem}

We look at the following PDE constrained shape optimization problem with convexity constraint:
\begin{align*} \label{p1:problem:P}
\begin{cases}
\text{ Minimize} & \ \ \ \int_{\Omega}j(x,u(x),\nabla u(x)) \text{d} x \tag{$\mathbf{P}$} \\ \text{ w.r.t.} & \ \ \ \Omega \subset \mathbb{R}^3, u \in H_0^1(\Omega) \\ \text{ s.t.} & \ \ \ - \Delta u = f \text{ in } \Omega \text{ and } \Omega \subset Q \text{ convex and open.}
\end{cases}
\end{align*}
Here, $Q \subset \mathbb{R}^3$ is bounded, open and convex and $j : Q \times \mathbb{R} \times \mathbb{R}^3 \rightarrow \mathbb{R}$ a suitable Carath\'{e}odory  function and $f \in L^2(Q)$.
\begin{proposition}[\cite{BW18}] 
Assume that $j$ is a Carath\'{e}odory  function and that there exist $a \in L^1(Q)$ and $c \ge 0$ such that $
\vert j(x,s,z) \vert \le a(x) + c(\vert s\vert^p + \vert z \vert^2)$ with $p = 2d/(d-2)$ for $d \ge 2$. Then there exists an optimal pair $(\Omega,u)$ for $\mathbf{(P)}$.\label{p1:prop:pp_existence}
\end{proposition}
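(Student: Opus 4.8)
The plan is to run the direct method of the calculus of variations, combining the compactness of convex sets contained in the fixed bounded convex set $Q$ with the stability of the Poisson problem under convergence of convex domains; this is exactly the argument of \cite{BW18} adapted to $d=3$. First I would fix a minimizing sequence $(\Omega_n,u_n)$ of admissible pairs, so that each $\Omega_n\subset Q$ is open and convex, $u_n\in H_0^1(\Omega_n)$ solves $-\Delta u_n=f$ in $\Omega_n$, and $\int_{\Omega_n}j(x,u_n,\nabla u_n)\,\text{d}x\to\inf(\mathbf{P})$. Since the $\Omega_n$ are convex and uniformly bounded by $Q$, the compactness of convex domains (the same principle used in Corollary~\ref{p1:thm:compactness_nonconf}, cf.\ \cite{BG97}) yields a convex limit domain $\Omega\subset Q$ with $\chi_{\Omega_n}\to\chi_\Omega$ in $L^1(Q)$ along a subsequence. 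For convex sets this $L^1$-convergence is equivalent to Hausdorff convergence of the closures, which is the geometric property I would exploit next.

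Next I would pass to the limit in the state equation. Extending each $u_n$ by zero gives $u_n\in H_0^1(Q)$, and testing $-\Delta u_n=f$ with $u_n$ together with Poincaré's inequality on $Q$ furnishes a uniform bound $\Vert\nabla u_n\Vert_{L^2(Q)}\le C\Vert f\Vert_{L^2(Q)}$; hence a subsequence converges weakly in $H_0^1(Q)$ to some $u$. The crucial point is to identify $u$ as the state associated with $\Omega$, i.e.\ to show $u\in H_0^1(\Omega)$ and $-\Delta u=f$ in $\Omega$. This is where convexity is indispensable: convex domains in $Q$ satisfy a uniform cone condition, so the spaces $H_0^1(\Omega_n)$ converge to $H_0^1(\Omega)$ in the sense of Mosco and the solution map is continuous along the sequence, which moreover upgrades the convergence $u_n\to u$ to strong convergence in $H_0^1(Q)$. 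I expect this stability of the Dirichlet problem under convex domain perturbation to be the main obstacle, since it fails for arbitrary domain sequences and relies essentially on the geometric regularity afforded by convexity.

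Finally I would pass to the limit in the cost. Writing the objective as $\int_Q\chi_{\Omega_n}\,j(x,u_n,\nabla u_n)\,\text{d}x$, the strong $H_0^1(Q)$-convergence gives $u_n\to u$ in $L^{p}(Q)$ (with $p=2d/(d-2)$, by the continuous Sobolev embedding) and $\nabla u_n\to\nabla u$ in $L^2(Q)$, hence pointwise a.e.\ convergence along a further subsequence. The Carathéodory property then yields $j(x,u_n,\nabla u_n)\to j(x,u,\nabla u)$ a.e., while the growth bound $\vert j(x,s,z)\vert\le a(x)+c(\vert s\vert^p+\vert z\vert^2)$ is dominated by the $L^1$-convergent majorant $a+c(\vert u_n\vert^p+\vert\nabla u_n\vert^2)$; a generalized dominated convergence (Vitali) argument gives $j(x,u_n,\nabla u_n)\to j(x,u,\nabla u)$ in $L^1(Q)$. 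Combining this with $\chi_{\Omega_n}\to\chi_\Omega$ in $L^1(Q)$ and the uniform bound $\vert\chi_{\Omega_n}\vert\le 1$ shows that the cost converges to $\int_\Omega j(x,u,\nabla u)\,\text{d}x$. Therefore $(\Omega,u)$ is admissible and attains the infimum, so it is an optimal pair for $(\mathbf{P})$.
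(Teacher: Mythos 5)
The paper itself gives no proof of this proposition---it is quoted from \cite{BW18}---and your argument reproduces the strategy of that cited proof: direct method, Blaschke-type compactness for convex subsets of $Q$, Mosco/$\gamma$-convergence stability of the Dirichlet problem along convergent sequences of convex domains, strong $H^1_0(Q)$-convergence of the extended states, and passage to the limit in the cost using the \emph{continuous} critical Sobolev embedding after strong $H^1$-convergence together with a generalized dominated convergence (Vitali) argument and $\chi_{\Omega_n}\to\chi_\Omega$ in $L^1(Q)$. Those steps are all sound.

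There is, however, one genuine gap: your justification of the Mosco convergence via ``convex domains in $Q$ satisfy a uniform cone condition.'' This is false without a nondegeneracy assumption. A minimizing sequence of convex domains may collapse onto a set of measure zero (e.g.\ flatten onto a two-dimensional disc), in which case $\vert\Omega_n\vert\to 0$, no uniform interior cone exists, and the limit open set is empty. This case cannot be excluded a priori and actually occurs: for $j(x,s,z)=s$ and $f\equiv 1$ (which satisfy the Carath\'eodory and growth hypotheses), the maximum principle gives $u>0$ on every nonempty admissible domain, so every nonempty pair has strictly positive cost, while collapsing sequences drive the cost to $0$; hence every minimizing sequence degenerates and the infimum is attained only by the empty set. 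The conclusion of the proposition survives because $\emptyset$ is convex and open and $(\emptyset,0)$ is an admissible pair with cost $0$, but your proof needs a two-case structure to see this. If $\vert\Omega_n\vert\to 0$, argue directly: any weak $H^1_0(Q)$-limit of the extended states is supported in a Lebesgue-null set and hence vanishes; the compact embedding $H^1_0(Q)\hookrightarrow L^2(Q)$ then gives $u_n\to 0$ in $L^2(Q)$, so $\Vert\nabla u_n\Vert_{L^2}^2=\int_Q f u_n\,\mathrm{d}x\to 0$, and since $a\in L^1(Q)$ and $\vert\Omega_n\vert\to 0$, the costs tend to $0=J(\emptyset,0)$. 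Only in the complementary case, where the limit $\Omega$ has nonempty interior, does a fixed ball $B_r(x_0)\subset\Omega_n$ hold for all large $n$ (a consequence of Hausdorff convergence and convexity), and only then does your uniform-cone/Mosco argument apply verbatim. With this repair your proof is complete and coincides with the argument of \cite{BW18}.
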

We consider the class $ \mathbb{T}$ of conforming, uniformly shape regular triangulations $\mathcal{T}_h$ of polyhedral subsets of $\mathbb{R}^3$ with elements $T \in \mathcal{T}_h$ with diameter $h_T \le h$, and impose a non-conformal discrete convexity constraint for the discrete problem.
\begin{align*} \label{p1:problem:P_h}
\begin{cases}
\text{ Minimize} & \ \ \ \int_{\Omega_h}j(x,u_h(x),\nabla u_h(x)) \text{d} x  \tag{$\mathbf{P_h}$}\\ \text{ w.r.t.} & \ \ \ \Omega_h \subset \mathbb{R}^3, \mathcal{T}_h \in \mathbb{T} \text{ triangulation of } \Omega_h, u_h \in \mathcal{S}_0^1(\mathcal{T}_h) \\ \text{ s.t.} & \ \ \ - \Delta_h u_j = f_h \text{ in } \Omega_h \text{ and } \Omega_h \subset Q \text{ discrete convex and open.}
\end{cases}
\end{align*}

Since we use an affine approximation of the domains and P1 finite elements, there are already many results available concerning the discretization and convergence analysis of the approximation of PDEs and the shape optimization. With the compactness of discrete convex domains, see Corollary \ref{p1:thm:compactness_nonconf}, the results from \cite{BW18} can be adapted to prove the stability of the shape optimization algorithm.

\begin{proposition}[Stability, cf. Proposition 4.3, \cite{BW18}]
Assume that j satisfies the conditions of Proposition \ref{p1:prop:pp_existence}, and such that  the
functional J defined by
\begin{equation*}
J(A, v) = \int_A j(x, v(x), \nabla v(x))\textup{d}x
\end{equation*}
satisfies for some constant $c_{J} > 0$ the estimate
\begin{equation*}
\vert J(A,v) -J(A,w)\vert \le c_J (1 + \Vert \nabla v \Vert_{L^{2}(A)} + \Vert \nabla w \Vert_{L^{2}(A)})\Vert \nabla (v-w) \Vert_{L^{2}(A)}
\end{equation*}
for every open set $A \subset Q$ and $v,w \in H^{1}_0(\Omega)$. 
Let $(\Omega_h, \mathcal{T}_h,u_n)_{h>0}$ be a sequence of discrete solutions of \eqref{p1:problem:P_h}. Then every accumulation pair $(\Omega,u)$ solves \eqref{p1:problem:P}.
\end{proposition}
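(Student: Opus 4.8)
The plan is to follow the $\Gamma$-convergence strategy of \cite[Proposition 4.3]{BW18}: first establish compactness of the discrete minimizers, then identify the limit as an admissible pair for \eqref{p1:problem:P}, and finally prove optimality through a matching pair of \emph{liminf} and \emph{recovery} inequalities. Starting from a sequence of discrete solutions $(\Omega_h,\mathcal{T}_h,u_h)$ of \eqref{p1:problem:P_h}, I would extract a convergent subsequence. By Corollary \ref{p1:thm:compactness_nonconf} the discrete convex domains $\Omega_h \subset Q$ admit a subsequence with $\chi_{\Omega_h} \to \chi_\Omega$ in $L^1(Q)$ for some convex limit $\Omega$. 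Extending each $u_h$ by zero to $Q$, the discrete energy estimates bound $u_h$ uniformly in $H^1_0(Q)$, so that along a further subsequence $u_h \rightharpoonup u$ weakly in $H^1_0(Q)$ and, by the compact embedding, strongly in $L^2(Q)$.

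The central step is to show that the limit $(\Omega,u)$ is admissible, i.e. that $u \in H^1_0(\Omega)$ and $-\Delta u = f$ in $\Omega$. Since $u_h$ vanishes on $Q \setminus \Omega_h$, I would pass to the limit in the product $u_h\,\chi_{Q\setminus\Omega_h}$, using the strong $L^2$-convergence of $u_h$ together with $\chi_{Q\setminus\Omega_h} \to \chi_{Q\setminus\Omega}$ in $L^1(Q)$, to conclude $u = 0$ a.e.\ on $Q \setminus \Omega$; convexity, hence Lipschitz regularity, of $\Omega$ then yields $u \in H^1_0(\Omega)$. For the state equation I would pass to the limit in the discrete weak formulation, approximating an arbitrary test function $\varphi \in H^1_0(\Omega)$ by discrete functions supported in $\Omega_h$ and invoking the convergence of the domains and of $f_h$ to $f$. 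I expect this identification of the limit state to be the main obstacle, because the discrete convex domains converge only in the $L^1$-sense of their characteristic functions and not with respect to their boundaries, so the containment $u \in H^1_0(\Omega)$ cannot be read off from boundary data and must be extracted from the volume convergence alone.

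For optimality I would combine two inequalities. The \emph{liminf} inequality $J(\Omega,u) \le \liminf_{h\to 0} J(\Omega_h,u_h)$ follows from the stability estimate on $J$ together with the established convergence of $u_h$ to $u$ and of $\chi_{\Omega_h}$ to $\chi_\Omega$: the Lipschitz-type bound controls the change of $J$ under perturbation of the state, while the growth condition from Proposition \ref{p1:prop:pp_existence} controls the contributions on the symmetric difference $\Omega_h \Delta \Omega$, whose measure tends to zero. For the reverse direction, given any admissible competitor $(\widetilde\Omega,\widetilde u)$ for \eqref{p1:problem:P}, I would build a recovery sequence: using the approximation results for discrete convex domains I construct discrete convex $\widetilde\Omega_h \to \widetilde\Omega$ in $L^1(Q)$, solve the discrete Poisson problem on $\widetilde\Omega_h$ to obtain $\widetilde u_h$, and verify $\limsup_{h\to 0} J(\widetilde\Omega_h,\widetilde u_h) \le J(\widetilde\Omega,\widetilde u)$ by the same stability estimate. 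The chain
\begin{equation*}
J(\Omega,u) \le \liminf_{h\to 0} J(\Omega_h,u_h) \le \limsup_{h\to 0} J(\widetilde\Omega_h,\widetilde u_h) \le J(\widetilde\Omega,\widetilde u),
\end{equation*}
whose middle inequality uses the discrete minimality $J(\Omega_h,u_h) \le J(\widetilde\Omega_h,\widetilde u_h)$, then shows that $(\Omega,u)$ is optimal, since $(\widetilde\Omega,\widetilde u)$ was arbitrary. The remaining delicate point is to ensure that the recovery states $\widetilde u_h$ converge strongly enough to $\widetilde u$ despite the domains matching only in volume, which I would again resolve through the uniform $H^1$-bounds and the stability estimate rather than through any boundary convergence.
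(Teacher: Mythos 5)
Your overall architecture --- compactness via Corollary \ref{p1:thm:compactness_nonconf}, identification of the limit pair, then the minimality chain with a recovery sequence --- is exactly the adaptation of \cite[Proposition 4.3]{BW18} that the paper has in mind (the paper itself gives no argument beyond that citation and the compactness corollary), and your admissibility step is sound: $u=0$ a.e.\ on $Q\setminus\Omega$ from strong $L^2$-convergence against $\chi_{Q\setminus\Omega_h}\to\chi_{Q\setminus\Omega}$, and then $u\in H^1_0(\Omega)$ because a bounded convex domain is Lipschitz. The genuine gap is in how you obtain convergence of the objective values. Both your liminf inequality and your recovery inequality require \emph{strong} $H^1$-convergence of the states, because the hypothesis on $J$ is only a Lipschitz-type estimate in $\Vert \nabla(v-w)\Vert_{L^2}$: $j$ is not assumed convex in the gradient variable, so $J$ is not weakly lower semicontinuous, and with only the weak $H^1$-convergence you established the estimate on $J$ cannot be applied at all. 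Moreover, the mechanism you propose for strong convergence --- ``uniform $H^1$-bounds and the stability estimate'' --- is circular: the stability estimate \emph{consumes} $\Vert\nabla(u_h-u)\Vert_{L^2}\to 0$ as input, it cannot produce it, and uniform bounds only give weak compactness.

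The missing idea is the standard energy (Mosco-type) argument. Once you know the weak limit $u\in H^1_0(\Omega)$ solves the limit equation, test the discrete equation with $u_h$ and the limit equation with $u$:
\begin{equation*}
\Vert \nabla u_h \Vert_{L^2(Q)}^2 = \int_{\Omega_h} f_h u_h \,\textup{d}x \longrightarrow \int_{\Omega} f u \,\textup{d}x = \Vert \nabla u \Vert_{L^2(Q)}^2 ,
\end{equation*}
using $u_h \to u$ in $L^2(Q)$ and $f_h\to f$. Norm convergence together with weak convergence in the Hilbert space $H^1_0(Q)$ yields $\nabla u_h \to \nabla u$ strongly in $L^2(Q)$; only now do the Lipschitz estimate on $J$ and the growth condition of Proposition \ref{p1:prop:pp_existence} (to control the contribution of $\Omega_h \triangle \Omega$) give $J(\Omega_h,u_h)\to J(\Omega,u)$, and the identical argument delivers the strong convergence of the recovery states $\widetilde u_h$ that you flagged as delicate. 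A secondary point you should also make explicit: to pass to the limit in the discrete weak form you need every test function $\varphi \in C_c^\infty(\Omega)$ to be supported in $\Omega_h$ for all small $h$, so that its interpolant is an admissible discrete test function; $L^1$-convergence of characteristic functions alone does not give this, but it does follow from the Hausdorff convergence of the convex hulls $\textup{conv}(\Omega_h)$ combined with the distance bound of Corollary \ref{p1:corollary:ldc:dist_convhull}.
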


\subsection{Algorithmic realization}

We discuss in this subsection aspects of the numerical realization of the shape optimization under the constraint of discrete convexity, such as the derivation of a local discrete convexity constraint in Section \ref{p1:sec:ldc} and a description of the shape optimization algorithm in Section \ref{p1:sec:ldc:so_alg}.
\subsubsection{Localized convexity constraint}\label{p1:sec:ldc}

Let $\mathcal{T}_h$ be a mesh of the discrete domain $\Omega_h$ with nodes $\mathcal{N}_h$. We will use Definition \ref{p1:def:discrete_convex_domain_support} of discrete convexity: The domain $\Omega_h$  is discrete convex if for any boundary node $z \in \mathcal{N}_h \cap \partial \Omega_h=:X$ there exists a support plane of $\Omega_h$. In order to decrease the number of constraints, we enforce the discrete convexity only locally and require each boundary node to have a support plane for the tetrahedral node patch $\omega^3_z= \bigcup_{T \in \mathcal{T}_h, z \in T} T$. In practice, we only look at the triangular node patch $\omega_z = \bigcup_{F \in \partial \mathcal{T}_h,z \in F} F$ of the boundary facet and only use the boundary nodes to search for a support plane, but such that the normal points into the exterior of the domain $\Omega_h$.  The convexity might then be violated globally, so we include a post processing to guarantee the discrete convexity globally. 

We formulate the local discrete convexity (l.d.c.) in the following way: for each node patch $\omega_z$, there exists an outward pointing normal $n_z$, which defines a halfspace $H_{n_z,z } = \{x: (z-x) \cdot n_z \ge 0 \}$, such that all other points of the node patch lie $H_{n_z,z }$. 
For the entire node patch we denote the set of admissible normals for a boundary node $z$, i.e. pointing into the exteriour of $\Omega_h$, as $N_z$. 
Then  $\Omega_h$ is locally discrete convex, if for every node patch $\omega_z$ of $\partial \Omega_h$, around the node $z$ with nodes boundary $z_i$, $i = 1,\dots m_z$ with normal $n_z \in N_z$
\begin{equation}
\widetilde{C}^z_i(X,n_z) = (z_i-z) \cdot n_z \le 0 \ \ \forall i = 1,\dots m_z.
\end{equation}
The functions $\widetilde{C}^{z}_i$ are differentiable, however the variables $n_z$ are in general not known, and the l.d.c. is satisfied if there exists a vector $n_z\in N_z $ such that $ \widetilde{C}^z_i(X,n_z) \le 0$, which can  no longer  be described by a differentiable function.
We discuss next how to construct the set $N_z$ and determine the variables $n_z(X)$. We then instead consider the derivative $D_X \widetilde{C}(X,n_z(X))$. 
It would be preferable to compute the convexity constraint independently of $n_z$, rather then computing suitable variables $n_z(X)$, but at this point it is not clear how to avoid this.
\begin{enumerate}
\item For a fixed point $z \in X$, we define the set $N_z$ of admissible normals. For this, we construct a a convex cone $C_z$ with vertex $z$ which satisfies the property that every support plane for the node patch $\omega^3_z$ at the vertex $z$ is a support plane for $C_z$. Then we define the set of admissible normals $N_z$ as all vectors $v_z$ which define a support plane for $C_z$, i.e.
\begin{equation}\label{p1:eq:czsubhz}
H_{v_z,z} = \{x: (z-x)\cdot v_z \ge 0\} \supseteq C_z.
\end{equation} 
The convexity of $C_z$ implies $N_z \neq \emptyset$, and any vector defining a support plane for $\bigcup_j T _j$ has to lie in $N_z$.

 If a vector $v_z$ exists such that the l.d.c is satisfied, it needs to define a support plane for each $T_i$, and each $T_i$ has at $z$ has a support plane defined by $n_{F_i}$.
With this, we construct the cone $C_z$ from the halfspaces defined by $n_{F_i}$, i.e.
\begin{equation}\label{p1:eq:definition_cz}
C_z = \bigcap_{i = 1,\dots m_z} H_{n_{F_i},z} = \bigcap_{i = 1,\dots m_z} \{x \subset \mathbb{R}^3: (z-x)\cdot n_{F_i} \ge 0\}.
\end{equation}
As a finite intersection of convex sets, $C_z$ is convex itself,and therefore also locally discrete convex in $z$, in particular every $n_{F_i}, i = 1, \dots m_z$ defines a support plane.
The set $C_z$ is optimal in the sense, that if $\bigcup_j T_j$ is locally convex in $z$, then every vector in $N_z$ defines a support plane for $\bigcup_j T_j$.

We note, that the local convexity of $\bigcup_{j} T_j$ is not implied by the local discrete convexity at $z$.
\item A more convenient description of the set $N_z$ can be found using Farkas' Lemma, which states that the intersection of finitely many halfspaces defines a polyhedral cone.

Without loss of generality assume $z = 0$, and $A = [n_{F_i}]_{i = 1,\cdots m_z} \in \mathbb{R}^{3,m_z}$. Then with Farkas' Lemma, \cite[Lemma 2.27]{geiger}, we have that the following equivalence of the following statements:
\begin{enumerate}
\item There exists $ \lambda \ge 0 \text{ such that } v_z= A \lambda = \sum_{i} \lambda_i n_{F_i}. $
\item $\text{ For any } x  \text{ such that } A^\top x \ge 0 \text{ i.e. }  x \in \bigcap H_{n_{F_i},z} \Rightarrow  x^\top v_z \ge 0 \text{, i.e. } x \in H_{v_z,z}.$
\end{enumerate}

We can therefore define equivalently $N_z = \{\sum_{i} \lambda_i n_{F_i}: \lambda_i \in \mathbb{R}^{m_z}_{\ge 0}\}.$
 
\item We assume the outer vertices of the node patch are ordered counter clockwise, such that each facet is given by $F_i = \text{conv}(z_i,z_{i+1},z)$ for $i = 1,\dots m_z$, with $z_{m_z+1} = z_1$.
Then, in order to check if the condition of l.d.c. is satisfied for a node patch $\omega_z$, with outer vertices $z_i$, we need to find a non-vanishing vector $v_z$ which defines a support plane for all $T_i$ and satisfies
\begin{equation}
(z_i-z) \cdot v_z \le 0 \quad \text{ for all } i = 1,\dots m_z\label{p1:eq:v_1}.
\end{equation}

With the second definition of $N_z$ we arrive at the equivalent condition for the l.d.c.: For the nodes, $z,z_i$, there exists $\lambda = [\lambda_i]_{i} \in \mathbb{R}^{m_z}_{\ge 0 }\backslash \{0\}$, such that
\begin{align}
(z_i-z) \cdot \sum_{j}\lambda_j n_{F_j} = M\lambda \le 0 \quad &\text{ for all } i = 1,\dots m_z\label{p1:eq:v_11}
\end{align}
with $M \in \mathbb{R}^{m_z\times m_z}$ and $ M_{i,j} = (z_i-z) \cdot n_{F_j}$. \\
Including the constraint $\sum_{i}\lambda_i  = 1$ allows us to make sure that the solution is not trivial, and $I\lambda \ge \varepsilon$, for  $\varepsilon >0$ to ensure that the vector $v_z$ does not lie on the boundary of the polyhedral cone (depending on the dimension of the cone), i.e. that no outer unit normal of the facets in the node patch is neglected.
We find a feasible point by solving the following linear program for a  suitable $k \in \mathbb{N}$:
\begin{align}\label{p1:eq:ldc_linear_programm}
\begin{cases}
\min_{\lambda} 1\cdot M\lambda &\quad \text{s.t. } M\lambda \le 0 \\ \text{and }\sum_{i}\lambda_i = 1,&\quad I \lambda \ge 0.5^k\varepsilon
\end{cases}
\end{align}
We can expect that the linear programm \eqref{p1:eq:ldc_linear_programm} has a feasible point as $k \rightarrow \infty$, if the l.d.c. is satisfied, since any vector defining a support plane for the node patch lies in $N_z$. 
Assume we have found $v_z = \sum_j \lambda_j n_{T_j} = \sum_j \lambda_j (z -z_j) \times (z-z_{j+1})$. 
We then have
\begin{equation}
C^z_i(X) = \sum_{j}\lambda_j(z_i-z) \cdot [(z-z_j) \times (z-z_{j+1})]\le 0, \quad i = 1,\dots m_z.
\end{equation}
\item Now, if we can find a feasible point of \eqref{p1:eq:ldc_linear_programm}, we have found a support plane at $z$, such that each $z_i \in H_{v_z,z}$ for all $i = 1,\dots m_z$. Since we only consider affine linear non-degenerate triangulations $\mathcal{T}_h$, this implies that $v_z$ defines a support plane for the whole node patch, i.e. the l.d.c. is satsified. 
\end{enumerate}

For each boundary node $z \in \partial \mathcal{N}_h$ we can then find a $ \lambda \ge 0 \in \mathbb{R}^{m_z}$ for which we can then derive a linearised convexity constraint
\begin{equation}
C^z_i(X) + DC^z_i(X)\cdot V \le 0, \quad i =1,\dots, m_z
\end{equation}
 which can  be included in the shape optimization algorithm.

\subsubsection{Shape optimization algorithm}\label{p1:sec:ldc:so_alg}

In this section we explain our ideas for the shape optimization algorithm. 
Since the shape optimization algorithm is similar to the one in \cite{keller,BW18}, we focus on particular aspects here.

We iteratively update the domain in  a descent direction for our objective value which we want to minimize.
After evaluating the convexity of the current domain and the state equation, we search for a suitable deformation field that acts as a descent direction. For this, we compute a shape derivative. In our case, with an affine linear triangulation and the Poisson equation as the state equation, the shape derivative is well researched and a volume formulation is available, which works well for the discrete problem. For analytic details, we refer to \cite{veodf,so_shape_sensitivity_analysis}, while for the numerical details we refer to \cite{hiptmair}.
The volume formulation of the shape derivative is given by
\begin{align*}
J^\prime(\Omega;V) = &\int_{\Omega} j_x(\cdot) \cdot V - j_v(\cdot) \cdot DV^\top \nabla u + \nabla p^\top[-DV -DV^\top + \text{div}(V)I]\nabla u \\ &- \text{div}(fV)p + j(\cdot)\text{div}(V) \, \text{d}x
\end{align*}
with the adjoint state $p \in H^1_0(\Omega)$, which solves
\begin{equation}
\int_{\Omega} \nabla p \cdot \nabla w \, \text{d}x = - \int_{\Omega} j_u(\cdot) w + j_v(\cdot) \cdot \nabla w\, \text{d}x
\end{equation}
for all $w \in H^1_0(\Omega)$ with  the partial derivatives $j_x,j_u,$ and $j_v$. From \cite{hiptmair}, we know that for the problem under consideration the derivative and discretization commute in the sense of forming the Eulerian derivative and that a similar expression is available for the P1 finite element discretization.

From the shape derivative we compute the deformation field with an appropriate inner product. Here, we employ the bilinar form motivated by a problem of linear elasticity   as in \cite{BW18} or \cite[Section 3]{schulz}
\begin{equation}
a_h(V_h,W_h) : = \int_{\Omega_h} 2\mu \varepsilon(V_h) : \varepsilon(W_h) + \lambda \text{div}(V_h) \text{div}(W_h) + \delta V_h \cdot W_h\,\text{d}x
\end{equation}
with homogenous Neumann boundary conditions for $V_h, W_h  \in \mathcal{S}^1(\mathcal{T}_h)^3$, with the symmetric gradient $\varepsilon(V_h) := (\nabla V_h + \nabla V_h^\top)/2$, and  $\mu, \lambda>0$ the Lam\'{e} parameters, and $\delta >0$ a damping parameter to ensure coercivity of the bilinear form.
We then search for a deformation field $V_h$, which solves
\begin{equation}
a_h(V_h,W_h) =-J_h^\prime(\Omega_h; W_h) \quad \text{ for all } W_h \in \mathcal{S}^1(\mathcal{T}_h)^3 
\end{equation}
for the shape derivative $J_h^\prime(\Omega_h; \cdot)$.
With Korn's inequality, solvability and discretization with P1 finite elements is straightforward, cf. \cite[Chapter 8]{pde}.
The choice for the damping and Lam\'{e}  parameters is  discussed in the respective numerical experiments in Section \ref{p1:ldc:sec:num_ex}.

Including the l.d.c. constraint we solve the problem 
\begin{align*}
\begin{cases}
\text{Min } & \frac{1}{2} a_h(V_h,V_h) +J^\prime_h(\Omega_h,V_h)\\
\text{w.r.t. }& V_h \in \mathcal{S}^1(\mathcal{T}_h)^3 \\
\text{s.t. } & C^z_i(X) +t^0 DC^z_i(X)V_h(X) \le 0 \quad \text{for all } z \in \partial \mathcal{N}_h \text{ and } i = 1, \dots m_z,
\end{cases}
\end{align*}
using the solver OSQP, \cite{stellato2020osqp}.\\
In order to ensure that the objective value does not increase, and to control the deformation of the domain and any violation of the local discrete convexity constraint, a line search is included, and the domain then updated accordingly. Lastly, to guarantee that the discrete convexity is not violated globally, a post processing is included.
These steps are then repeated, until $\tau = \vert J_h^\prime(\Omega_h,V_h) \vert < \varepsilon_{\text{stop}}$, such that the current iteration is almost stationary.

\subsection{Numerical experiments} \label{p1:ldc:sec:num_ex}

We are interested in the approximation of the following problem:
\begin{align*}
\begin{cases}
\text{ Minimize} & \ \ \ J(\Omega) = \int_{\Omega}u \ \text{d} x \\ \text{ w.r.t.} & \ \ \ \Omega \subset Q \subset \mathbb{R}^3 \text{ convex and open}, u \in H^1(\Omega)  \\ \text{ s.t.} & \ \ \  -\Delta u +u= f \text{ in } \Omega,  \quad \partial_n u = 0 \text{ on } \partial \Omega.
\end{cases}
\end{align*}
In our experiments we use the local discrete convexity constraint in the discretization and consider two particular choices for the function $f$:
\begin{align}
f_1(x_1,x_2,x_3)& = x_1^2+x_2^2+x_3^2-1\label{p1:P1} \tag{$\mathbf{P_1}$}\\
f_2(x_1,x_2,x_3) & = 20(x_1+0.4-x_2^2)^2+x_1^2+x_2^2+x_3^2-1 \label{p1:P2}\tag{$\mathbf{P_2}$}
\end{align}

\begin{figure}[b] 
\begin{center}
     \includegraphics[height = 3cm]{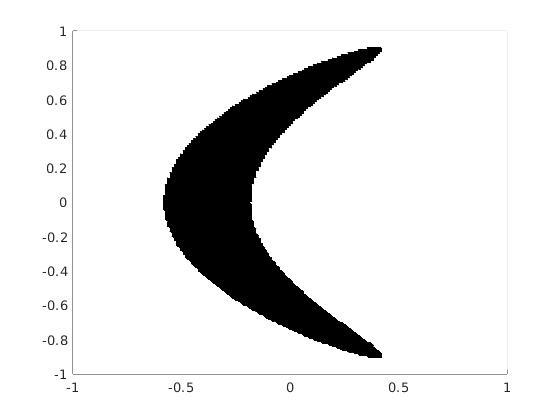}
     \end{center}
  \caption{Intersection at $x_3 = 0$ of non-convex sublevel set $\{x \in \mathbb{R}^3: f_2(x) < 0 \}$,}\label{p1:fig:level_set_nonconvex}
     \end{figure}

From the functions we expect that the optimal domain for  \eqref{p1:P1} is the ball, since the sublevel set $\{f_1<0\} = B_1(0)$ is already convex. The second problem \eqref{p1:P2} has less symmetry and the convexity constraint is not redundant, since  the sublevel set $\{f_2<0\}$ defines a non-convex set, see Figure \ref{p1:fig:level_set_nonconvex}.

We denote by $E_{a,b}(x)$ the ellipsoid with center $x$ with the radii $(a,b,c)$, with $c = 1/ \sqrt{ab}$, such that $E_{a,b}$ has the volume of the unit ball. For example $E_{1,1}(0) = B_1(0)$.
We have
\begin{equation}
E_{a,b}(x) = \left\{ x \in \mathbb{R}^3: (x_1/a)^2+(x_2/b)^2+(x_3/c)^2 -1 < 0;\, c = 1/ab \right\}.
\end{equation}
As initial domains, we use the discretizations of the ellipsoids $\Omega^0_1 = E_{0.8,0.8},\Omega_2^0 = E_{1,1} = B_1(0)$,  and $\Omega_3^0 = E_{1.1,1.1}$, with refinements $h = 2^{-\ell},\ell = 3$. The initial domains are shown in Figure \ref{p1:fig:pp_so_elastic_p2}.
By construction, we expect that the domain $\Omega_2^0$ is already nearly optimal for the problem \ref{p1:P1}.
As elasticity parameters we choose $E = 0.5,\nu =0.2$ for Young's Module, and the damping parameter $\rho = 0.5$.

The results of the shape optimization of problem \ref{p1:P1} are listed in Table \ref{p1:table:pp_so_elastic}, for initial domains $\Omega^0_i , i= 1,2,3$ and refinement $h = 2^{-\ell},\ell = 3$. The optimized domains are not shown here, but as expected, it is an approximation of the unit ball. In this experiment, the discrete convexity constraint is redundant and in experiments without convexity constraints similar results can be observed, see \cite[Section 6.3]{BW18}. 

Next we look at the optimization of problem \ref{p1:P2}. Here, the convexity constraint has an impact on the optimization compared to proplem \ref{p1:P1} due to the non-convexity of the sublevel set $\{f_2 < 0\}$. 
The results of the shape optimization are listed in Table \ref{p1:table:pp_so_elastic}, for initial domains $\Omega^0_i , i= 1,2,3$ and refinements $\ell = 3$. The optimized domain for $\ell = 3$ are displayed in Figure \ref{p1:fig:pp_so_elastic_p2}. As we can see, the optimized domains shrink such that they are almost discrete convex and nearly contained inside the non-convex domain, for which $f_2$ is the sublevel set function.  The results for both experiments are consistent with our expectations, and with the experiments of our second approach, see Section \ref{p1:sec:isop_ex}.

\begin{table}[!t]
\caption{Objective values for \ref{p1:P1} (top) and \ref{p1:P2} (bottom) for the initial domains $\Omega_i^0$, approximated optimal domains $\Omega_i$, as well as the volume of initial and optimized domain, the violation of convexity, stopping criteria $\tau$, and number of iterations $k$, $i = 1,2,3$, and refinement level  $\ell = 3$}\label{p1:table:pp_so_elastic}
\begin{center}
\begin{tabular}{ p{1cm} p{1.5cm}p{1.5cm}p{1.5cm}p{1.5cm}p{1.5cm}p{1.5cm}p{0.5cm}}
\hline\noalign{\smallskip}
 \ref{p1:P1} & $J_1(\Omega_i^0)$ & $J_1(\Omega_i )$ & $\vert \Omega_i^0\vert$ & $\vert \Omega_i \vert$ & $\tau$ & $\max C(\Omega_i)$ & $k$ \\ \noalign{\smallskip}\hline\noalign{\smallskip}
$ i = 1 $& -1.5289	& -1.6437	& 4.1494 &	4.0924	& -0.0118 &	-0.0445	& 3\\
$i = 2$ & -1.6462	& -1.6462	& 4.1526 &	4.1526	& -1.181e-18 &	-0.0539	& 1\\
$i = 3$ & -1.6227	& -1.6459	& 4.1518	& 4.1351	& -0.0025 &	-0.0447	& 3\\
\noalign{\smallskip}\hline\noalign{\smallskip}
& & & & & & &  \\
\hline\noalign{\smallskip}
 \ref{p1:P2} & $J_2(\Omega_i^0)$ & $J_2(\Omega_i )$ & $\vert \Omega_i^0\vert$ & $\vert \Omega_i \vert$ & $\tau$ & $\max C(\Omega_i)$ & $k$ \\ \noalign{\smallskip}\hline\noalign{\smallskip}
$ i = 1 $&	17.0783	&-0.2093&	4.1494&	0.6266&	-0.1275&	0.0014	&90\\
$ i = 2$&	22.2511&	-0.2086&4.1526&	0.6312&	-0.1454&	0.0014	&90\\
$ i = 3$ &25.7313	&-0.2077&	4.1518&	0.6303&	-0.1924&	0.0014	&94\\
\noalign{\smallskip}\hline\noalign{\smallskip}
\end{tabular}
\end{center}
\end{table}

\begin{figure}[t] 
\begin{center}
 \includegraphics[trim = {10cm 0 9cm 0},height =4cm]{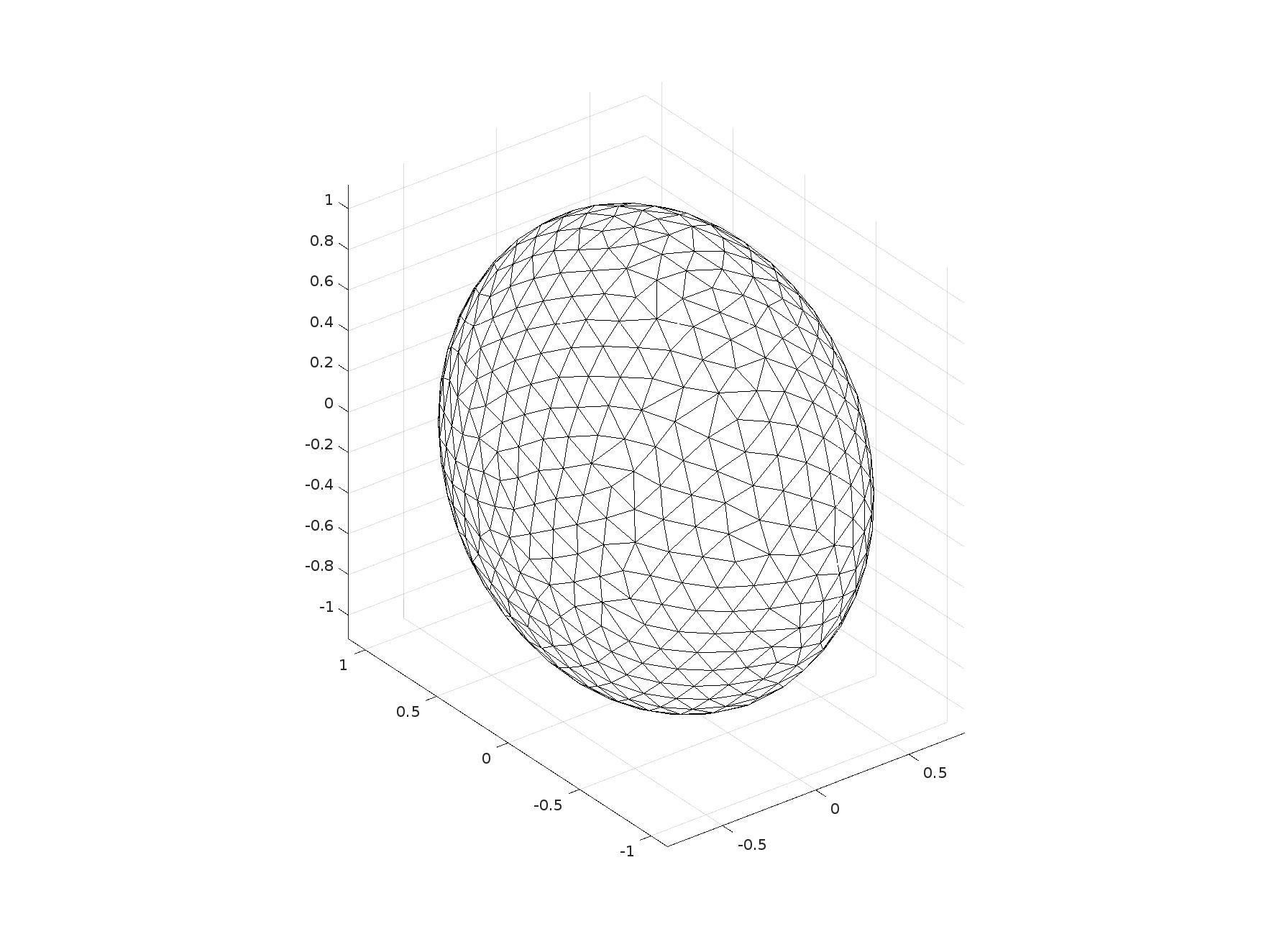}
  \includegraphics[trim = {10cm 0 8cm 0},height =4cm]{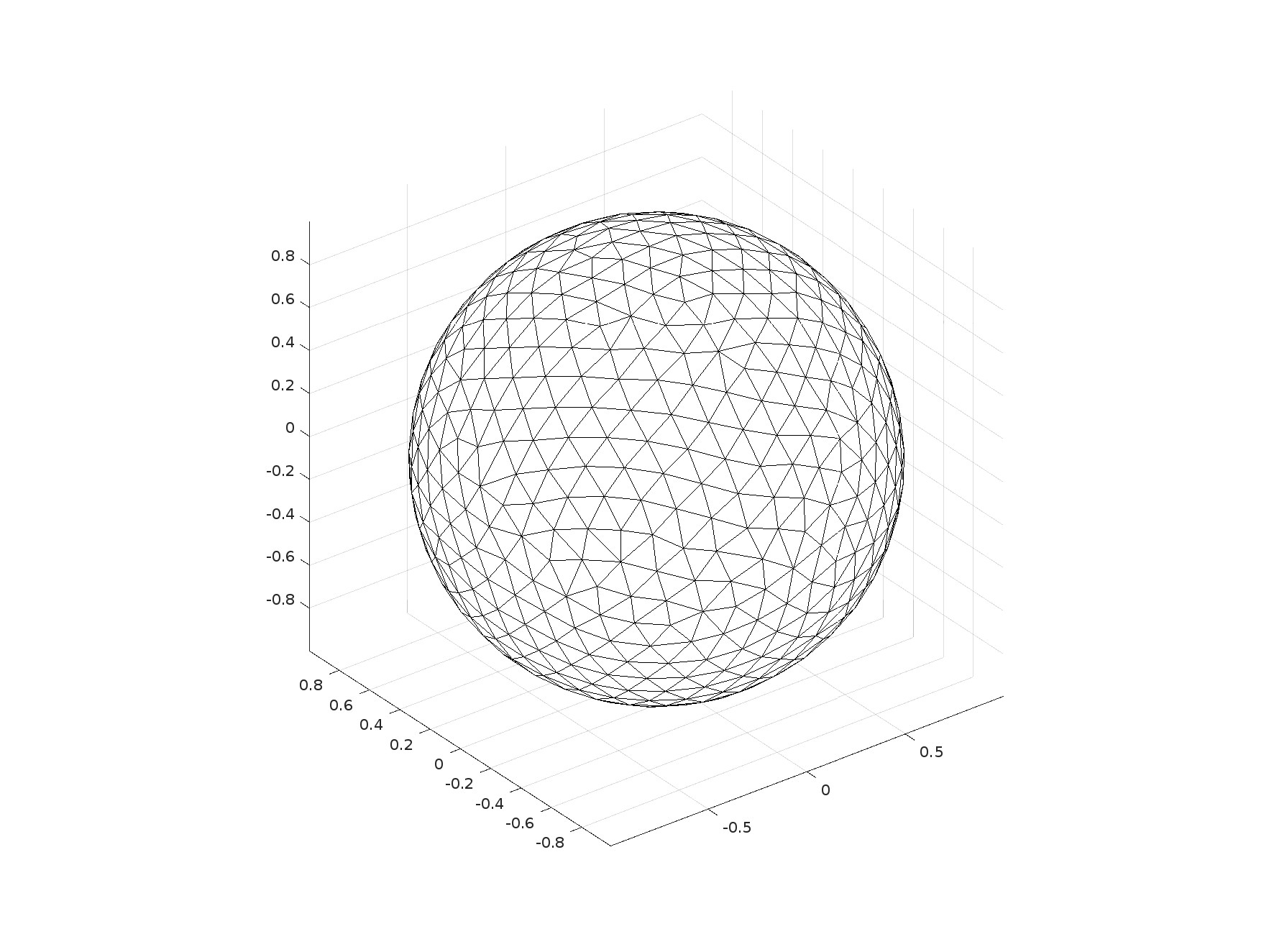}
   \includegraphics[trim = {10cm 0 10cm 0},height =4cm]{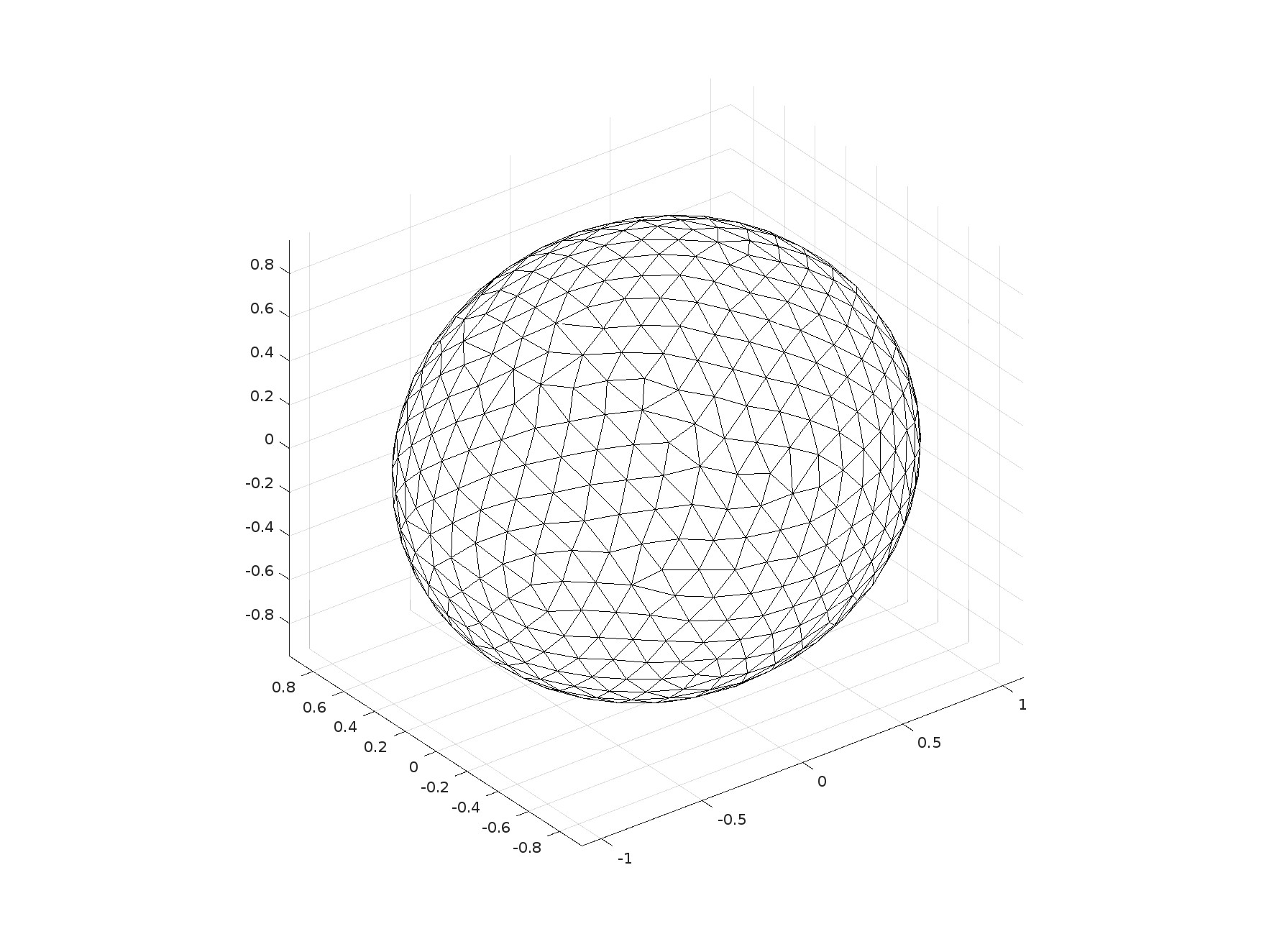}
\\
     \includegraphics[trim = {7cm 0 1cm 0},height =4cm]{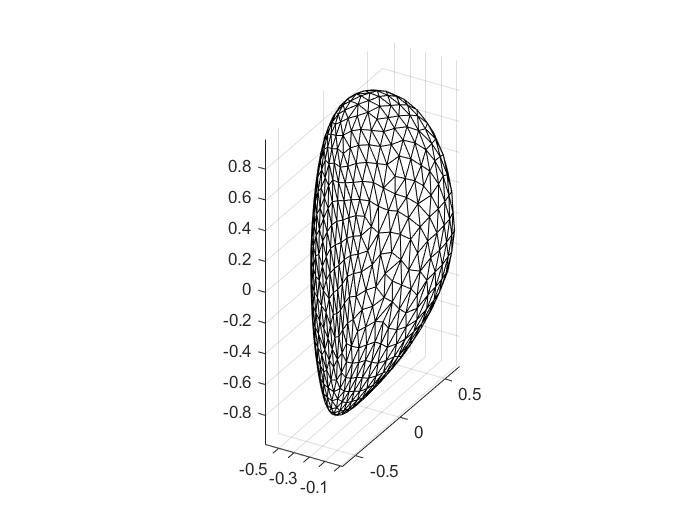}
     \includegraphics[trim = {7cm 0 1cm 0},height = 4cm]{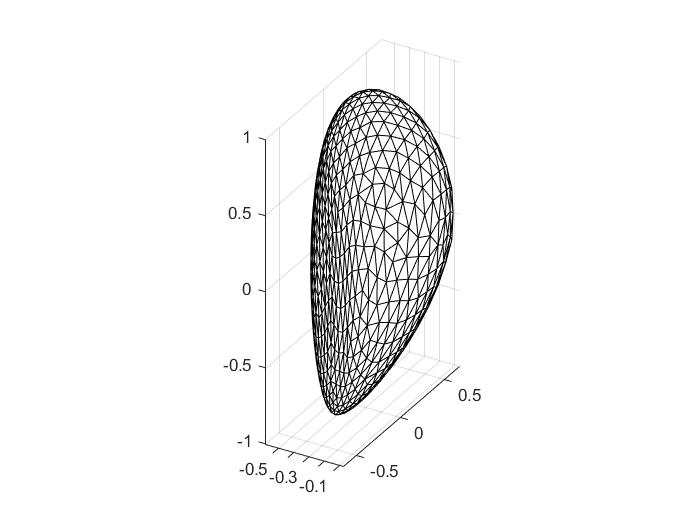}
     \includegraphics[trim = {7cm 0 5cm 0},height = 4cm]{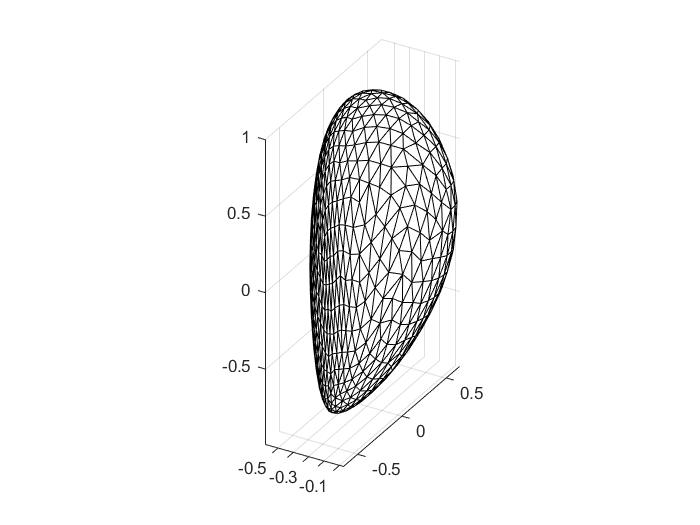}
  \caption{Initial (top) and optimized  (bottom) domains for problem \ref{p1:P2} for initial domain $\Omega^0_i, i = 1,2,3$, with refinement parameter $\ell = 3$}\label{p1:fig:pp_so_elastic_p2}
\end{center}
     \end{figure}

\section{Isoparametric approximation of convex domains}\label{p1:sec:chd}

In the previous section we adressed a non-conformal approximation with polyhedral domains. The definition of discrete convexity relies only on the approximation of the volume. The regularity of the boundary of convex sets is lost and the compactness result, especially regarding the convergence in variation, is not obvious. This poses a problem for optimization problems with boundary terms such as the eigenvalue arising in a problem in optimal insulation, \cite{BB19,keller,BBN17}. Therefore a conformal approximation of convex sets is still desirable.

Previous results from \cite{wachsmuth} show that convex functions can be conformally approximated by higher order finite elements. This motivates approximating convex domains with isoparametric finite elements  with a piecewise quadratic boundary. 
We summarize the main results from \cite{wachsmuth} in Section \ref{p1:sec:wachsmuth} followed by brief introduction to isoparametric finite elements in Section \ref{p1:sec:background_isop}. Then we apply these results to convex domains in Section \ref{p1:sec:isop_approx}. In Section \ref{p1:sec:isop_so} we summarize the shape optmization problem constrained by a Poisson equation, and discuss the shape optimization algorithm in Section \ref{p1:sec:isop_alg}.  Lastly in Section \ref{p1:sec:isop_ex}, we show the results of the numerical experiments of the shape optimization algorithm.

\subsection{Conformal approximation of convex functions} \label{p1:sec:wachsmuth}

We shortly describe the results from \cite{wachsmuth}.
Assume we have a domain $\omega \subset \mathbb{R}^2$ with a triangulation $\widehat{\mathcal{T}}$. Let $\mathcal{P}^k(\widehat{\mathcal{T}})$ be the continuous piecewise polynomials of degree at most $k$.
We assume that $\{\widehat{\mathcal{T}}_h\}_{h >0}$ is a family of  quasi-uniform triangulations.
\begin{theorem}[Theorem 2.6, \cite{wachsmuth}]
A function $u_h \in \mathcal{P}^k(\widehat{\mathcal{T}}_h)$ is convex if and only if 
\begin{align}
\nabla^2u_h(x) \succeq 0 & \quad \text{ for all } x \in \textup{int}(T), T \in \widehat{\mathcal{T}}\label{p1:eq:hessian_condition}\\
[\![ \nabla u_h^\top n_F ]\!] \ge 0& \quad \text{ for all } x \in \textup{relint}(F), F \in \mathcal{F}_i(\widehat{\mathcal{T}}) \label{p1:eq:kink_condition}
\end{align}
hold. Here $n_F$ is the normal vector of the facet $F$ and $\mathcal{F}_i(\widehat{\mathcal{T}})$ is the set of interior facets of the triangulation $\widehat{\mathcal{T}}_h$.
\end{theorem}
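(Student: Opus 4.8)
The plan is to reduce the statement to a one-dimensional question by restricting $u_h$ to line segments, using the classical fact that a continuous function on the convex set $\omega$ is convex if and only if its restriction to every line segment contained in $\omega$ is convex. Since $u_h$ is continuous and convexity is a closed condition, it suffices to verify this for segments in \emph{generic} position, i.e.\ those that cross the edges of $\widehat{\mathcal{T}}_h$ transversally at finitely many interior points and avoid all vertices; the remaining segments are then recovered by approximation together with continuity of $u_h$.

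For a generic segment I would parametrise $g(s) = u_h(x_0 + s\,d)$ in a direction $d$. On each sub-interval where the segment stays inside a single triangle $T$, the function $g$ is a polynomial with $g''(s) = d^\top \nabla^2 u_h(x_0 + s\,d)\, d$, so that $g'' \ge 0$ is exactly the contraction of \eqref{p1:eq:hessian_condition} with $d \otimes d$. At an interior crossing through an edge $F$, separating $T_1$ from $T_2$, the first derivative of $g$ jumps by $[\![g']\!] = (\nabla u_h|_{T_2} - \nabla u_h|_{T_1}) \cdot d$. The decisive observation is that continuity of $u_h$ across $F$ forces the tangential components of the two one-sided gradients to agree, so the gradient jump is purely normal: $\nabla u_h|_{T_2} - \nabla u_h|_{T_1} = [\![\nabla u_h^\top n_F]\!]\, n_F$. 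Hence $[\![g']\!] = [\![\nabla u_h^\top n_F]\!]\,(n_F \cdot d)$, and since reversing the orientation of $n_F$ flips both factors, this quantity is independent of the chosen normal and has the sign of $[\![\nabla u_h^\top n_F]\!]$ once $d$ is oriented in the direction of travel.

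With these two facts the equivalence follows in both directions. For necessity, convexity of $u_h$ makes every such $g$ a convex one-dimensional function, so $g'' \ge 0$ on each smooth piece, which yields \eqref{p1:eq:hessian_condition} upon varying $d$ over all directions, and $[\![g']\!] \ge 0$ at each kink, which yields \eqref{p1:eq:kink_condition} upon choosing a transversal $d$. For sufficiency, conditions \eqref{p1:eq:hessian_condition} and \eqref{p1:eq:kink_condition} together make $g'$ non-decreasing, since it increases on each smooth piece and jumps upward at every edge crossing, so $g$ is convex on every generic segment and therefore $u_h$ is convex.

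The main obstacle I expect is not the smooth interior estimate but the careful treatment of the edge crossings: one must verify that continuity genuinely reduces the gradient jump to the single normal scalar $[\![\nabla u_h^\top n_F]\!]$, track the orientation so that the sign of $[\![g']\!]$ is correctly tied to the stated condition, and rigorously dispose of the degenerate segments (those lying inside an edge or passing through a vertex) by a density argument. For $\omega \subset \mathbb{R}^2$ the exceptional set reduces to the finite vertex set, so the approximation step is comparatively mild; the same scheme carries over to higher dimensions at the cost of a more delicate analysis of the lower-dimensional faces.
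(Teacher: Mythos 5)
Your proof is correct, but note that there is nothing in this paper to compare it against: the statement is imported verbatim as Theorem~2.6 of \cite{wachsmuth} and is used here purely as a black box (to justify the convex interpolant $\mathcal{I}^2_h(u)+\gamma_1 h\psi_h+\gamma_2 h\phi$ and its isoparametric adaptation); the paper contains no proof of it. Judged on its own, your line-restriction argument is a complete and standard proof of this characterization, and you handle the two genuinely delicate points correctly: continuity across $F$ forces the inter-element gradient jump to be purely normal, so that $[\![g']\!]=[\![\nabla u_h^\top n_F]\!]\,(n_F\cdot d)$ with a sign that is independent of the choice of $n_F$ (both factors flip), and the density argument over generic segments is exactly what makes any separate condition at the vertices unnecessary --- which is the non-obvious content of the theorem, since \eqref{p1:eq:hessian_condition} and \eqref{p1:eq:kink_condition} say nothing about the lower-dimensional skeleton. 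For comparison, an alternative and equally standard route proceeds through the distributional Hessian: for continuous, piecewise smooth $u_h$ one has
\begin{equation*}
D^2 u_h \;=\; \sum_{T\in\widehat{\mathcal{T}}_h} \nabla^2 u_h\,\mathcal{L}^2|_{T} \;+\; \sum_{F\in\mathcal{F}_i(\widehat{\mathcal{T}}_h)} [\![ \nabla u_h^\top n_F ]\!]\; n_F\otimes n_F\,\mathcal{H}^{1}|_{F},
\end{equation*}
and a continuous function on a convex domain is convex if and only if its distributional Hessian is a positive semidefinite matrix-valued measure; the two stated conditions are precisely the nonnegativity of the absolutely continuous and the singular parts. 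Your approach buys elementarity (no measure-theoretic machinery) at the cost of the bookkeeping for degenerate segments; the distributional route is shorter and carries over to $\mathbb{R}^d$ with no extra analysis of lower-dimensional faces.
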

Under some assumptions on the triangulation we can approximate a  convex function $u \in W^{3,\infty}(\omega)$ with a convex piecewise quadratic function $u_h$. 
The interpolant
\begin{equation}
u_h = \mathcal{I}^2_h(u) +\gamma_1 h \psi_h + \gamma_2 h\phi \in \mathcal{P}^2(\widehat{\mathcal{T}}_h)
\end{equation}
 is composed of  the Lagrange interpolant $\mathcal{I}^2_h(u)$ in $\mathcal{S}^2(\widehat{\mathcal{T}}_h)$, a linear function $\psi_h \in \mathcal{P}^1(\widehat{\mathcal{T}}_h)$ to correct the gradient jumps to ensure \eqref{p1:eq:kink_condition} is satisfied, and the quadratic function $\phi(x) = (1/2)\Vert x \Vert^2$, to ensure that  \eqref{p1:eq:hessian_condition} is satisfied, with  appropriately chosen $\gamma_1, \gamma_2 \ge 0$, see \cite[Theorem 3.2]{wachsmuth} for details. 
 
Such an interpolant exists under the assumption on the triangulation that a suitable correction function $\psi_h \in \mathcal{P}^1(\widehat{\mathcal{T}}_h)$ exists, see \cite[Assumption 3.1]{wachsmuth}.
This assumption is satisfied for example if the triangulations $\widehat{\mathcal{T}}_h$ are uniformly acute, or the triangulations $\widehat{\mathcal{T}}_h $ are $\delta$-protected with $\delta \ge c h$ with $c > 0$ independent of $h$, i.e. if there lies no other node within the disctance $\delta$ of the circumcircle of each triangle in $\mathcal{T}_h$.

\begin{corollary}[\cite{wachsmuth}, Theorem 3.2 and Corollary 3.3]
Assume that the assumption \cite[Assumption 3.1]{wachsmuth} on the triangulation is satisfied and that $u \in H^1(\omega)$ is convex. Then there exist convex functions $u_h \in P^2(\widehat{\mathcal{T}}_h)$, such that, as $h \rightarrow 0$, 
\begin{equation}
\Vert u -u_h \Vert_{H^1(\omega)} \rightarrow 0.
\end{equation}
\end{corollary}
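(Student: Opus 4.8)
The plan is to reduce the general convex $u \in H^1(\omega)$ to the smooth case already handled by the explicit interpolant and then to pass to the limit by a diagonal argument. Concretely, I would first approximate $u$ by smooth convex functions in the $H^1$-norm, then invoke the construction $u_h = \mathcal{I}^2_h(u) + \gamma_1 h \psi_h + \gamma_2 h \phi$ for each smooth approximant, and finally combine the two limits into a single family.

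First I would establish that smooth convex functions are dense among the convex $H^1$-functions on $\omega$ with respect to the $H^1$-norm. Assuming $\omega$ is convex with, say, $0 \in \omega$, for $\lambda \in (0,1)$ the dilation $u_\lambda(x) := u(\lambda x)$ is again convex (composition of a convex function with a linear map) and, viewed on the enlarged set $\lambda^{-1}\omega \supsetneq \overline{\omega}$, is defined using only values of $u$ inside $\omega$. Mollifying $u_\lambda$ with a nonnegative radial mollifier of radius $\rho < \mathrm{dist}(\overline{\omega}, \partial(\lambda^{-1}\omega))$ yields $v_{\lambda,\rho} \in C^\infty(\overline{\omega})$ which is still convex, since convolution with a nonnegative kernel preserves convexity. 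Standard dilation-continuity and mollifier estimates give $v_{\lambda,\rho} \to u$ in $H^1(\omega)$ as $(\lambda,\rho) \to (1,0)$ in a suitable joint manner (for instance $\lambda \to 1$ first, then $\rho = \rho(\lambda) \to 0$), and each $v_{\lambda,\rho}$ lies in $W^{3,\infty}(\omega)$ because it is smooth on the compact set $\overline{\omega}$.

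Next, for each fixed smooth convex approximant $v := v_{\lambda,\rho}$ I would apply the explicit construction recalled above: since $v \in W^{3,\infty}(\omega)$ is convex, the interpolant $v_h = \mathcal{I}^2_h(v) + \gamma_1 h \psi_h + \gamma_2 h \phi$ is convex by Theorem 2.6, and its correction terms are bounded by $h$ times constants depending only on $v$. Hence, for fixed $v$, standard interpolation error bounds give $\mathcal{I}^2_h(v) \to v$ in $H^1(\omega)$ while the corrections vanish, so $\|v - v_h\|_{H^1(\omega)} \to 0$ as $h \to 0$; this is precisely \cite[Theorem 3.2 and Corollary 3.3]{wachsmuth}. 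A diagonal argument then closes the proof. Choosing a sequence $v_n$ of smooth convex functions with $\|u - v_n\|_{H^1(\omega)} < 1/n$, and for each $n$ a decreasing threshold $\delta_n > 0$ with $\|v_n - (v_n)_h\|_{H^1(\omega)} < 1/n$ for all $h \le \delta_n$, I set $u_h := (v_n)_h$ for $h \in (\delta_{n+1}, \delta_n]$. Each such $u_h$ lies in $\mathcal{P}^2(\widehat{\mathcal{T}}_h)$ and is convex, and $\|u - u_h\|_{H^1(\omega)} \le \|u - v_n\|_{H^1(\omega)} + \|v_n - (v_n)_h\|_{H^1(\omega)} < 2/n \to 0$ as $h \to 0$.

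The main obstacle is the first step, namely the density of smooth convex functions in the convex $H^1$-functions while retaining convexity. The delicate points are the boundary behaviour of the mollification — a convex function on $\omega$ need not be defined on a neighborhood of $\overline{\omega}$, which is why the preliminary dilation $u_\lambda$ is introduced to create room — and the coordination of the two limits, since the dilation error $\|u_\lambda - u\|_{H^1(\omega)}$ must be controlled jointly with the mollification error, forcing $\rho$ to be chosen small depending on $\lambda$. By contrast, the second step is a citation of \cite[Theorem 3.2]{wachsmuth} together with routine interpolation estimates, and the diagonal construction of the full family $(u_h)_{h>0}$ is standard.
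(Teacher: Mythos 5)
This statement is imported verbatim from the cited reference (\cite{wachsmuth}, Theorem 3.2 and Corollary 3.3); the paper itself gives no proof, only the citation, so there is no paper-internal argument to compare against. Your reconstruction --- density of smooth convex functions among convex $H^1(\omega)$ functions via dilation plus mollification (which uses the implicit standing assumption that $\omega$ is convex, as it is in the paper's setting), then the convex interpolant $\mathcal{I}^2_h(v) + \gamma_1 h \psi_h + \gamma_2 h \phi$ of Theorem 3.2 applied to each smooth approximant, and finally a diagonal argument over $h$ --- is correct and is precisely the argument by which the cited corollary is established.
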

With higher regularity, such as $u \in W^{3, \infty}(\omega)$, we can obtain the convergence rate $\mathit{O}(h)$. This convergence rate is suboptimal in the absence of a convexity condition, but cannot be improved in general, see \cite[Section 5.2]{wachsmuth}.

In what follows we investigate if convex domains can be approximated in a similar fashion, with a piecewise quadratic approximation of the boundary. For convex domains which can be described as the hypograph of concave functions, such that the convexity condition can be applied globally, the application of the results to convex domains is obvious. If the convexity condition has to be applied to local parametrizations of the boundary of a convex domain, an equivalent construction of an interpolant and the interpretation of the conditions on the triangulation is more difficult.

\subsection{Isoparametric finite elements} \label{p1:sec:background_isop}

Since we want to approximate the boundary with a piecewise quadratic function, we approximate $\Omega$ with isoparamteric finite elements. In our approach, we only consider isoparametric finite elements with a piecewise quadratic boundary and omit this specification for simplicity.
We follow the notation of \cite{pde} and provide a short overview of the main concepts. For more details on isoparametric finite elements we refer to \cite[Section 4.3]{ciarlet} and \cite{demlow} for the approximation of the boundary with higher order elements. 

For an isoparametric triangulation $\mathcal{T}^3_h$ of $\Omega$ a given $T^3 \in \mathcal{T}^3_h$ is specified by a  quadratic function $a_{T^3}$, s.t. $T^3 = a_{T^3}(K_3)$ for the reference tetrahedron $K_3= \text{conv}(\{0,e_1,e_2,e_3\}) \subset \mathbb{R}^3$, with the canonical basis vector $e_i, i =1,\dots,3$, i.e.
\begin{equation}
a_{T^3}:K_3 \rightarrow T^3, \quad a_{T^3}(\widehat{x}) \mapsto \sum_{j =1}^{4} D_j^{T^3} \widehat{\varphi}_j(\widehat{x}) + \sum_{1 \le i < j\le 4} D_{i,j}^{T^3} \widehat{\varphi}_{i,j}(\widehat{x}).
\end{equation}
with the hierarchical basis of functions $\widehat{\varphi}_k$ of $\mathcal{P}^2(K_3)$ for $ k \in N:= \{1,\dots, 4\} \cup \{(i,j),1\le i < j \le 4\}$.

For $i = 1,\dots 4, \, D_i^{T^3}$ corresponds to the vertices $P_i \in \mathbb{R}^3$ of ${T^3}$, and $D_{i,j}^{T^3} = P_{i,j}^{T^3} - (P_i^{T^3}+P_j^{T^3})/2$ for $1\le i < j \le 4$. We then have for the nodes $z_k$ of ${T^3}$ that $a_{T^3}(\widehat{z}_k) = P_k^{T^3}$ for $ k \in N$.

For $\Omega$ we have the nodes $\mathcal{N}_h = \{ z \in \mathbb{R}^3: {T^3} \in \mathcal{T}^3_h, k \in N, z = P_k^{T^3}\} $. On $\Omega$ we use the basis functions $\varphi_z$ for each node $z \in \mathcal{N}_h$
\begin{equation}
\varphi_z(x) = \begin{cases} \widehat{\varphi}_k \circ  a_{T^3}^{-1}(x) &\text{ for } a_{T^3}(\widehat{z}_k) = z \\ 0 & \text{otherwise.} \end{cases}
\end{equation}
With this we define the finite element space
\begin{equation}
\mathcal{S}^{2,iso}(\mathcal{T}^3_h) = \left\{ v_h \in C(\overline{\Omega}): v_h = \sum_{z \in \mathcal{N}_h} \alpha_z \varphi_z \right \}.
\end{equation}
By $\widehat{T}^3$ we denote the corresponding affine simplex, i.e. the image of the map $\widehat{a}_{T^3} = \sum_{j=1}^4  D_j^{T^3}\widehat{\varphi}_j$, and analogously $\widehat{\mathcal{T}}_h$ the affine triangulation. 
We denote the nodal interpolant for the isoparametric interpolant to the transformed basis as $\mathcal{I}^{2,iso}$.

Interpolation results as well as approximation results  for the Poisson problem or homogeneous second order Dirichlet problems, are available see e.g. \cite[Sections 4.3 and 4.4]{ciarlet}.

\subsection{Application to convex domains} \label{p1:sec:isop_approx}

Next, we discuss how to approximate convex domains with a convex isoparametric domain. Usually, the interpolation of a surface $\Gamma \subset \mathbb{R}^3$ with curved elements is straightforward, \cite{demlow}. However, ensuring that the surface corresponds to a convex domain is more difficult. Adapting the results from \cite{wachsmuth} to convex domains is only possible locally, with an interpolant depending on the parametrization.

To avoid this problem, we first restrict our problem to convex symmetric domains, such that the boundary can be described globally through one convex function.

\begin{definition}[Convex half domains] \rm
Let $\Omega \subset \mathbb{R}^3$ be a convex symmetric domain with respect to the plane $\mathbb{R}^2 \times \{0\}$. We define  $\Omega^+ := \Omega \cap (\mathbb{R}^2\times \mathbb{R}_{\ge 0})$ as the \textit{ convex half domain,} which is a convex domain itself.
For a convex half domain $\Omega^+$ we define the two-dimensional domain $\omega \subset \mathbb{R}^2$ for which $\omega \times \{0\} = \Omega \cap (\mathbb{R}^2 \times \{0\})$, i.e. the projection of $\Omega$ to the plane of symmetry.
Let $\Gamma_{sym} := \omega \times \{0\}$ and  $\Gamma_{out} := (\partial \Omega)^+ = \partial \Omega \cap (\mathbb{R}^2\times \mathbb{R}_{\ge 0}) = (\partial \Omega^+ \cap \partial \Omega)$, such that $\partial \Omega^+ := \Gamma_{out} \cup \Gamma_{sym}$. \label{p1:def:chd}
\end{definition}
The domain $\Omega^+$ is half of a convex symmetric domain, if and only if we can describe it as the hypograph of a concave, non-negative function
$u : \omega \rightarrow \mathbb{R}$ defined on the plane $\omega = \Omega \cap (\mathbb{R}^2\times \{0\})$ so that  $\overline{\Omega^+} = \{(x,z) \in \mathbb{R}^2\times \mathbb{R}_{\ge 0}: x \in \omega, z \le u(x)\}$.

For example set $\Omega = B_1(0) \subset \mathbb{R}^3$. Then $\omega= \{(x,y)\in \mathbb{R}^2: x^2+y^2 \le 1\} \in \mathbb{R}^2$ and $u(x,y) = \sqrt{1-x^2-y^2}: \mathbb{\omega} \rightarrow \mathbb{R}$. The function $u$ is concave and $u \in H^1_{loc}(\omega)$. A visualization of the convex half domain is shown in Figure \ref{p1:fig:example_chd}.

 \begin{figure}[b]
\begin{center}
\includegraphics[height = 3.5cm]{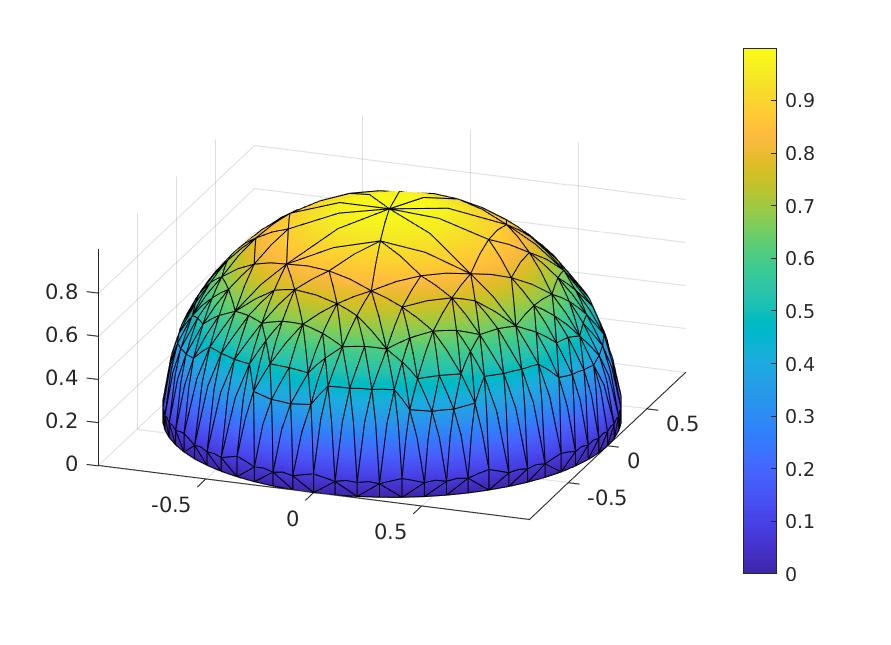}
\includegraphics[height = 3.5cm]{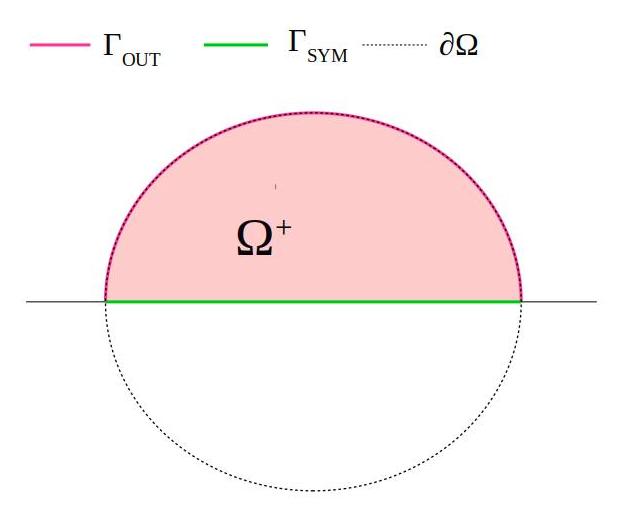}
  \caption{Approximation of the convex half domain $B_1(0)^+$ (left) and a two-dimensional sketch of the composition of the boundary as defined in Definition \ref{p1:def:chd} (right)} \label{p1:fig:example_chd}
\end{center}
     \end{figure}

Let $u: \omega \rightarrow \mathbb{R}$ be a concave function. Then $u$ is locally Lipschitz continous and $u \in H_{loc}^1(\omega)$, \cite[Theorem 2.1]{latorre}.
We assume here, that $u \in H^1(\omega)$, otherwise we approximate $\omega$ with a sequence $\omega_n \Subset \omega$ with a compact support in $\omega,$ such that $\chi_{\omega_n} \rightarrow \chi_{\omega}$  in $L^1$ as $n \rightarrow 0$. Then the functions $u_{\vert \omega_n}\in H^1(\omega_n)$ and the corresponding convex symmetric domains $\Omega_n$ satisfy $\chi_{\Omega_n} \rightarrow \chi_{\Omega}$ in $L^1$.\\ 
For the function $u \in H^1(\omega)$ we can find a sequence of concave functions $u_h \in \mathcal{P}^2(\mathcal{T}_h(\omega))$, such that $\Vert u_h- u \Vert_{H^1(\omega)} \rightarrow 0$ as $h \rightarrow 0$, see \cite{wachsmuth}.
The functions $u_h \in \mathcal{P}^2(\mathcal{T}_h(\omega))$ define convex half domains $\Omega_h$ with a piecewise quadratic boundary, such that
$\vert \Omega \triangle \Omega_{h} \vert \le c \Vert u_{h} -u \Vert_{L^2(\omega)}  \rightarrow 0 \text{ as } n \rightarrow \infty. $

We generate the discrete domain $\Omega_h^+$ with an  isoparametric  triangulation $\mathcal{T}^3_h$, by first generating a trianguluation of $\Gamma_{out}$ by computing the convex interpolant following \cite{wachsmuth}, and then generating a constraint Delaunay tetrahedralization of $\Omega_h^+$ using the \textsc{MATLAB} toolbox \textsc{GIBBON}, \cite{gibbon}.

\begin{remark}
We only describe here the approximation of symmetric convex domains. It is also possible to adapt the results to general convex bounded domains, since every bounded convex domain in $\mathbb{R}^3$ can be described as the intersection of the epigraph of a convex function and the hypograph of a concave function.
\end{remark}

\subsubsection{Local convexity constraint}

The results from \cite{wachsmuth} show us how to characterize the convexity, if we can describe the boundary globally by one convex function. For the shape optimization a local convexity constraint without a graph property is more convenient.
So in the following, if we approximate a symmetric convex domain directly, for example to obtain an initial domain for the shape optmization, we use the graph condition and the convex interpolant from \cite{wachsmuth}. 
For the shape optimization, in particular the constraint on the deformation field, we use a local convexity constraint, which allows us to avoid the graph condition.
\paragraph{Local constraint on the Hessian}
For the convex set $\Omega\subset \mathbb{R}^3$, any point $p \in \partial \Omega$ has a support plane, 
and if any point $p \in \partial \Omega$ has a support plane, and $\Omega$ is open, or a closed set with a non-empty interior, then $\Omega$ is convex, \cite[Theorem 1.3.3]{schneider}.  With a sufficiently smooth boundary the local convexity is equivalent to infinitesimal convexity, the positive semi-definiteness of the second fundamental form, \cite{bishop}.

Following \cite{demlow}, for a local parametrization $a: U \subset \mathbb{R}^2 \rightarrow \mathbb{R}^3$ of $\partial \Omega$, the second fundamental form with respect to the basis $\{a_{x_1},a_{x_2}\}$ of the tangent space is given by $II = [a_{x_{i,j}} \cdot \nu]$, with the outer unit normal $\nu = \frac{a_{x_1} \times a_{x_2}}{\vert a_{x_1} \times a_{x_2} \vert}$. 
For isoparametric domain $\Omega_h$, the interior of  $T^2 \in \mathcal{T}^2_h(\partial \Omega)$ of the triangulation of the boundary $\partial \Omega_h$ is smooth enough such that the local convexity is equivalent to the positive semi-definiteness of the second fundamendtal form
\begin{equation*}
II = [a_{T,x_{i,j}}, \nu_T]_{i,j} \succeq 0.
\end{equation*}
Due to the non-negativity of the norm, this is equivalent to 
\begin{equation} \label{p1:eq:c_h_simp}
C_H(T) = [a_{T,x_{i,j}}, a_{T,x_i} \times a_{T,x_j}]_{i,j} \succeq 0.
\end{equation}
This corresponds to the condition \eqref{p1:eq:hessian_condition} on the Hessian in \cite{wachsmuth}, and is only applied piecewise. Therefore we need a condition for the sides in the boundary triangulation, corresponding to the condition \eqref{p1:eq:kink_condition}  on the gradient jumps.

 \paragraph{Local constraint on the kinks}

For the kinks $S = T_1 \cap T_2$ of $\partial \Omega$ for two neighbouring simplices, we  derive a constraint, such that every point $p \in S$ has a support plane such that it is locally convex.
For $T_+,T_- \in \mathcal{T}^2_h(\partial \Omega_h)$  we denote the connecting edge as $S = \overline{T}_+\cap \overline{T}_-$. The curved triangles $T_+$ and $T_-$ are both $C^2$, but the union is only a Lipschitz surface.
 We assume that the curved triangles $T_+,T_-$ are convex, for example that they satisfy the infinitesimal convexity constraint \eqref{p1:eq:c_h_simp}.
 Due to the piecewise smoothness of the simplices we can assign to $x\in S$ two unit normals $\nu_+$ and $\nu_-$. The tangent planes defined by  $\nu_+$ and $\nu_-$ at point $x$ are each support planes for the simplices $T_+$ and $T_-$ respectively. We define the tangent vector of $x$ in $S$ as $\tau$.
 
We orient the simplices such that the tangent vector $\tau$ of $x$ at $S$ is so, that $T_+$ lies "left" of $S$ and $T_-$ lies "right" of $S$. With this orientation, we choose the normals of $S$ at $x$, lying in the corresponding tangent planes of $x$, i.e. $n^S_+ = \nu_+ \times \tau =  -\tau \times \nu_+$
and $n^S_- = \tau \times \nu_-$.
A sketch of the assumed orientation and the orientation of the outer normals for a convex and a non-convex kink in the boundary is shown in Figure \ref{p1:fig:kink_condition}.

We assume here, that the boundary is already piecewise locally convex, for example the triangles $T_+,T_-$ satisfy the infinitesimal convexity condition \eqref{p1:eq:c_h_simp}. Then it suffices to look at the local convexity for the tangent planes of $T_+, T_-$ at $x \in S$, since a support plane of the tangent planes is then also a support plane for the curved triangulations.
Therefore, without loss of generality it can be assumed that that $T_+$ and $T_-$ are affine triangles for any $x \in S$. 
The convexity condition on the kink in the isoparametric condition is then the same as a conformal convexity condition that can be imposed on a polyhedral approximation, see \cite[Section 5.3]{BW18}, which states that the normal vectors $\nu_+,\nu_-$ are oriented with respect to the vector $\tau$ such that  the signed volume $\det([\nu_-,\tau,\nu_+]) = \nu_- \cdot (\tau \times \nu_+)$ of the parallelepiped defined by the three vectors is non-negative. This implies that the local convexity at $x \in S$ is equivalent to the condition
\begin{equation}\label{p1:eq:c_k_prop}
C_K(x,S) = \nu_-(x) \cdot (\tau(x) \times \nu_+(x)) \ge 0.
\end{equation}

 \begin{figure}[b]
\begin{center}
\includegraphics[height = 4cm]{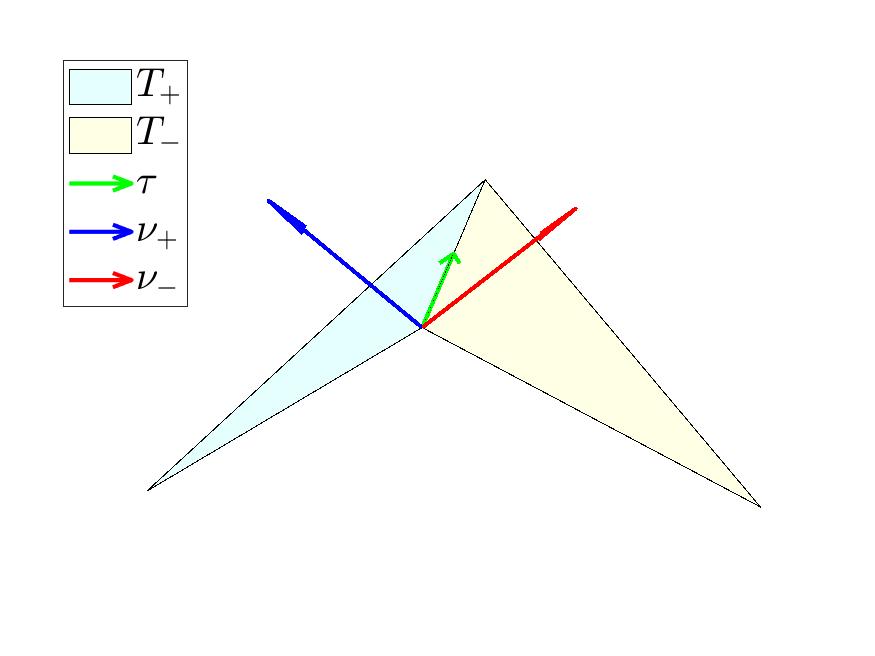}
\includegraphics[height = 4cm]{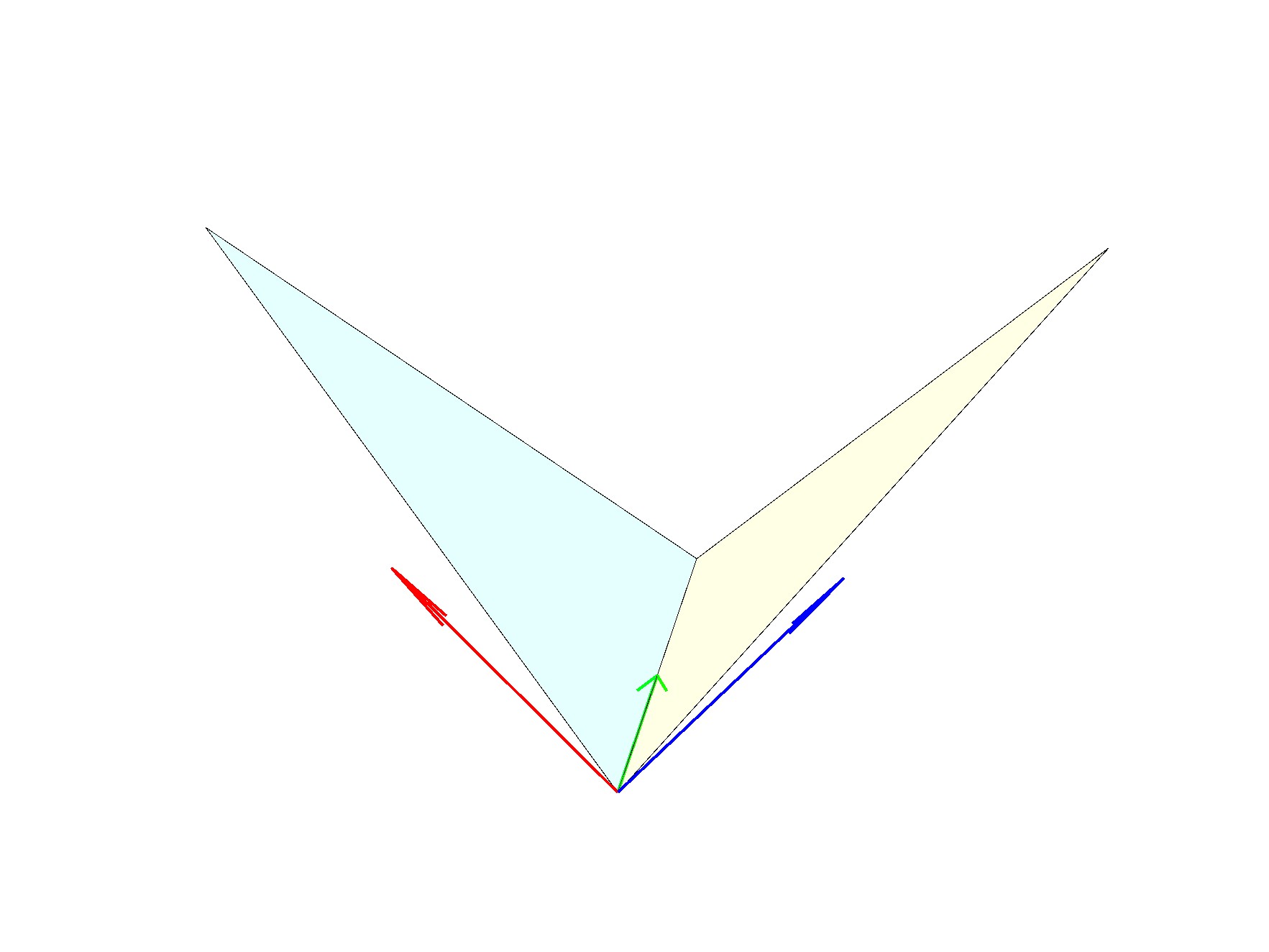}
  \caption{A sketch of the assumed orientation and the convexity condition for the kink between two neighboring simplices of a boundary triangulation of a domain $\Omega$ below the triangles, with a convex example (left) and a non-convex example (right). The convexity can by infered from the orientation of the outer normals $\nu_+,\nu_-$ and the tangent vector $\tau$ of the side $S$ connecting the simplices $T_+, T_-$.} \label{p1:fig:kink_condition}
\end{center}
     \end{figure}

\begin{remark}
If a neighbourhood $U\subset T_+ \cup T_-$ of $x$ can be described as the graph of a function, i.e. $U = \{(x,u(x)): x \in V\}$ for a domain open domain $V \in \mathbb{R}^{n-1}$, the outer unit normal is then given by
\begin{equation*}
\nu = \frac{(\nabla u,-1)}{\sqrt{1+\vert \nabla u \vert^2}}.
\end{equation*}
This can be used to derive the condition \eqref{p1:eq:c_k_prop} from the conditions \eqref{p1:eq:kink_condition} on the gradient jumps of \cite{wachsmuth}, if a suitable graph representation of the boundary is possible.
\end{remark}

Let $a^+: U_+ \subset \mathbb{R}^2 \rightarrow \mathbb{R}^3 $, $a^-: U_- \subset \mathbb{R}^2 \rightarrow \mathbb{R}^3$ be local parametrizations of $T_+, T_-$, and $a^S: U_S \subset \mathbb{R} \rightarrow \mathbb{R}^3$ a parametrization of the side $S$.
Due to the non-negativity of the norm, we then derive the equivalent condition to \eqref{p1:eq:c_k_prop} at the side $S$ of simplices $T_+$ and $T_-$ by
\begin{equation}\label{p1:eq:c_k_simp}
C_K(S)= (a^-_{x_1}\times a^-_{x_2}) \cdot \left( a^S_{x_1} \times\left( a^+_{x_1}\times a^+_{x_2}\right)\right) \ge 0 .
\end{equation}

In the case of convex half domains a modified kink condition is used to ensure the convexity of the mirrored domain
\begin{align*}
C^+_K(S)= \begin{cases} \nu_+^3 \ge 0 \quad &\text{ if }S \subset \Gamma_{\text{out}} \cap \Gamma_{\text{sym}}, \\
  C_K(S)  \quad &\text{ otherwise.} \end{cases}
\end{align*}

\subsection{Shape optimization for the Poisson problem} \label{p1:sec:isop_so}

As in Section \ref{p1:sec:non_conf}, we are interested in the shape optimization where the state equation is the Poisson equation.
\begin{align*}\label{p1:problem:P_D_cont}
\begin{cases}
\text{ Minimize} & \ \ \ \int_{\Omega}j(x,u(x),\nabla u(x)) \text{d} x \\ \text{ w.r.t.} & \ \ \ \Omega \subset \mathbb{R}^3, u \in H_0^1(\Omega) \tag{$\mathbf{P}$}\\ \text{ s.t.} & \ \ \ - \Delta u = f \text{ in } \Omega \text{ and } \Omega \subset Q \text{ convex, symmetric and open.}
\end{cases}
\end{align*}
Here, $Q \subset \mathbb{R}^3$  is bounded, open and convex, $j : Q \times \mathbb{R} \times \mathbb{R}^3 \rightarrow \mathbb{R}$ a suitable Carath\'{e}odory function and $f \in L^2(Q)$.

To reduce computational time, we only optimize among symmetric domains here, the algorithm can however also be adapted to general convex domains.  The symmetry constraint can be easily included into the existence result of Proposition \ref{p1:prop:pp_existence}. \\
For the discretization we consider the class $ \mathbb{T}$ of conforming, uniformly shape regular isoparametric triangulations $\mathcal{T}_h$ of isoparametric subset of $\mathbb{R}^3$ with elements $T \in \mathcal{T}_h$ with diameter $h_T \le h$. For the discretization we solve the following problem:
\begin{align*}
\begin{cases}
\text{ Minimize} & \ \ \ \int_{\Omega_h}j(x,u_h(x),\nabla u_h(x)) \text{d} x \\ \text{ w.r.t.} & \ \ \ \Omega_h \subset \mathbb{R}^3, \mathcal{T}_h \in \mathbb{T} \text{ isoparametric triangulation of } \Omega_h, u_h \in \mathcal{S}_0^{2,iso}(\mathcal{T}_h)\tag{$\mathbf{P_h}$}\\ \text{ s.t.} & \ \ \ - \Delta_h u_j = f_h \text{ in } \Omega_h \text{ and } \Omega_h \subset Q \text{ convex, symmetric and open.}
\end{cases}
\end{align*}

 Due to conformal approximation of the convexity constraint, it is straightforward to adapt the results from \cite{BW18} for the consistency and stability of the shape optimization using the approximation results for isoparametric domains from \cite[Chapter 4.4]{ciarlet}. 

\subsection{Shape optimization algorithm} \label{p1:sec:isop_alg}

The shape optimization works very similar to the one described in the previous Section \ref{p1:sec:non_conf}.
We again approximate optimal domains with a perturbation of identity, for which we first need to approximate the shape derivative. We approximate again the volume formulation of the shape derivative. This works well in the experiments, even if the
theory for approximate shape derivatives does only extend to affine linear deformations, \cite{hiptmair}.
We compute the deformation field from a problem of linear elasticity.

Further, we include the linearised convexity constraints derived from a first order expansion
\begin{align*}
C_K(\partial \Omega) + DC_K(\partial \Omega)\cdot V &\ge 0, \\
C_H(\partial \Omega) + DC_H(\partial \Omega)\cdot V &\succeq 0,
\end{align*}
defined earlier, see conditions \eqref{p1:eq:c_h_simp} and \eqref{p1:eq:c_k_simp}.
Together with the convexity constraints, the deformation field solves the following minimzation problem:
\begin{align*}
\begin{cases}
\text{Min }& \frac{1}{2} v_h^T a_h v_h = -J^\prime(\Omega_h,v_h) \\
\text{s.t. } & C_K(\partial \Omega_h) + DC_K \cdot V \ge 0 \\
 \text{and }&C_H(\partial \Omega_h) +DC_H \cdot V \succeq 0
\end{cases}
\end{align*}
In the case where we optimize only among convex half domains $\Omega^+$, the adapted convexity constraint $C^+_K$ is used, to ensure the convexity of the whole domain $\Omega$ near the plane of symmetry, and apply gliding boundary conditions on $\Gamma_{sym}$ for $V$.
In order to solve this problem and find the deformation field, we adapt the Augmented Lagrangian used in \cite{wachsmuth}.
As in Section \ref{p1:sec:non_conf} we include a line search to control the deformation, and iterate the steps until a suitable stopping criterion is satisfied.

\subsection{Numerical experiments} \label{p1:sec:isop_ex}

We repeat the experiments from Section \ref{p1:ldc:sec:num_ex}, but with the isoparametric approximation described in this section.  
\begin{align*}
\begin{cases}
\textup{Minimize } & J(\Omega^{+}) = \int_{\Omega^{+}} u(x) dx \\ 
\textup{w.r.t } & \Omega^{+} \subset Q \subset \mathbb{R}^2  \times \mathbb{R}_{\ge 0}\text{ a open convex half domain, } \,u \in H^1(\Omega^{+}) \\
\text{s.t. } & - \Delta u + u= f \text{ in } \Omega^{+}, \quad \frac{\partial u}{\partial n}	 = 0   \text{ on } \partial \Omega^{+}.
\end{cases}
\end{align*}
Here, we discretize the convex domains with convex isoparametric domains and consider again the following two functions:
\begin{align}
f_1 (x_1,x_2,x_3)& = x_1^2+x^2+x^3-1, \tag{$\mathbf{P^{+}_1}$} \label{p1:problem:P_1_half} \\
f_2 (x_1,x_2,x_3)& = 20(x_1+0.4-x_2^2)^2+x_1^2+x_1^2+x_1^2-1. \tag{$\mathbf{P^{+}_2}$} \label{p1:problem:P_2_half} 
\end{align}
Both problems are symmetric, and so are the solutions of the state equation due to the existence of a unique solution of the Poisson problem and the regularity results for the Poisson equation for convex domains. Therefore solving the shape optimization among convex half domains does not pose a restriction here, and we expect similar results for both the approximation with local discrete convex domains and isoparametric domains, except that the objective value differs by the factor 2.
As initial domains we choose the half ellipsoids $\Omega_1 = E^{+}_{0.9,0.9},\Omega_2= E^{+}_{1,1}$ and $\Omega_3 = E^{+}_{1.1,1.1}$, for different levels of refinements $h = 2^{-\ell}$.
As elasticity parameters we choose $E = 0.5,\nu =0.2$ for Young's Module, and the damping parameter $\rho = 0.5$, to ensure the coercivity of the bilinear form arising in the problem of linear elasticity.

The objective values  are listed in Table \ref{p1:table:optimal_PN_half} and the optimized domains are shown in Figure \ref{p1:fig:optimal_P1N_half} and \ref{p1:fig:optimal_P2N_half} for problems \ref{p1:problem:P_1_half} and \ref{p1:problem:P_2_half} respectivly.
For $f_1$, the algorithm approximated the domain accuratley for different initial domains. 
Following our expectations and the previous numerical experiments in Section \ref{p1:ldc:sec:num_ex}, the optimized domains are approximations of the unit ball.
For $f_2$ the algorithm also performed well, with a similar violation of the convexity constraint as for problem \ref{p1:problem:P_1_half} and the approximated optimal domains and objective values are consistent with the approximated optimal domains with the local discrete convexity constraint, see Section \ref{p1:ldc:sec:num_ex}.
For both experiments only the refinement $\ell \le 2$ was possible, due to the higher computational effort using $\ell > 2$.

\begin{table}[!t]
\caption{Objective values  $J(\Omega^{+}_i)$ of approximated optimal domains $\Omega^{+}_i$ for different initials domains $\Omega^+_{0,i}$ with break condition $\tau$, number of iterations $k$, and convexity conditions $C_H, C_K$ of approximated domains for problems \eqref{p1:problem:P_1_half} and \eqref{p1:problem:P_2_half} (bottom) and refinement level $\ell = 2,h = 2^{-\ell}$; approximated optimal domains are shown in  Figure \ref{p1:fig:optimal_P2N_half}}
\label{p1:table:optimal_PN_half}
\begin{center}
\begin{tabular}{p{0.6cm} p{1.1cm}p{1.1cm}p{1.1cm}p{1.1cm}p{1.1cm}p{2.2cm}p{2.2cm}p{0.4cm}}
\hline\noalign{\smallskip}
 \eqref{p1:problem:P_1_half} & $J_1(\Omega^{+}_{0,i}) $& $J_1(\Omega^{+,}_i)$ & $\vert\Omega^{+}_{i,0}\vert $&$ \vert\Omega^{+}_i \vert $ &$ \tau$  &$ \min(C_K(\Omega^{+}_i))$ & $\min(C_H(\Omega^{+}_i))$& $k$  \\ \noalign{\smallskip}\hline\noalign{\smallskip}
$\Omega_1$   &-0.7149&	-0.7764	&2.0836&	2.0709&	-5.5e-5&	-5.6681e-7&	-3.0954e-6&12\\
$\Omega_2$ &-0.7868&	-0.7874	&2.1131&	2.1036&	-5.5e-6&	-7.8455e-8&	-1.2895e-6&	4  \\
$ \Omega_3$&-0.7444&	-0.7837	&2.0817	&2.0841&	-1.8e-5&	-3.8683e-7&	-3.7264e-6	&11 \\
\noalign{\smallskip}\hline\noalign{\smallskip}
& & & & & & &  \\
\hline\noalign{\smallskip}
 \eqref{p1:problem:P_2_half} & $J_2(\Omega^{+}_{0,i}) $& $J_2(\Omega^{+,}_i)$ & $\vert\Omega^{+}_{i,0}\vert $&$ \vert\Omega^{+}_i \vert $ &$ \tau$  &$ \min(C_K(\Omega^{+}_i))$ & $\min(C_H(\Omega^{+}_i))$& $k$ \\ \noalign{\smallskip}\hline\noalign{\smallskip}
$ \Omega_1$&9.9534	&-0.1003&	2.0836	&0.2977&	-5.5e-5	&-5.5392e-7	&-1.3406e-6&	83
 \\ $ \Omega_2$&11.8133&	-0.1024	&2.1131	&0.2988&	-3.8e-4	&-1.4899e-7&	-1.1170e-6&	78\\
 $\Omega_3$&13.5209&	-0.1019	&2.0817	&0.3049	&-6.9e-5&	-4.5366e-7&	-5.2413e-6&73 \\ 
\noalign{\smallskip}\hline\noalign{\smallskip}
\end{tabular}
\end{center}
\end{table}

	 \begin{figure}[h]
\begin{center}
\includegraphics[height = 3.5cm]{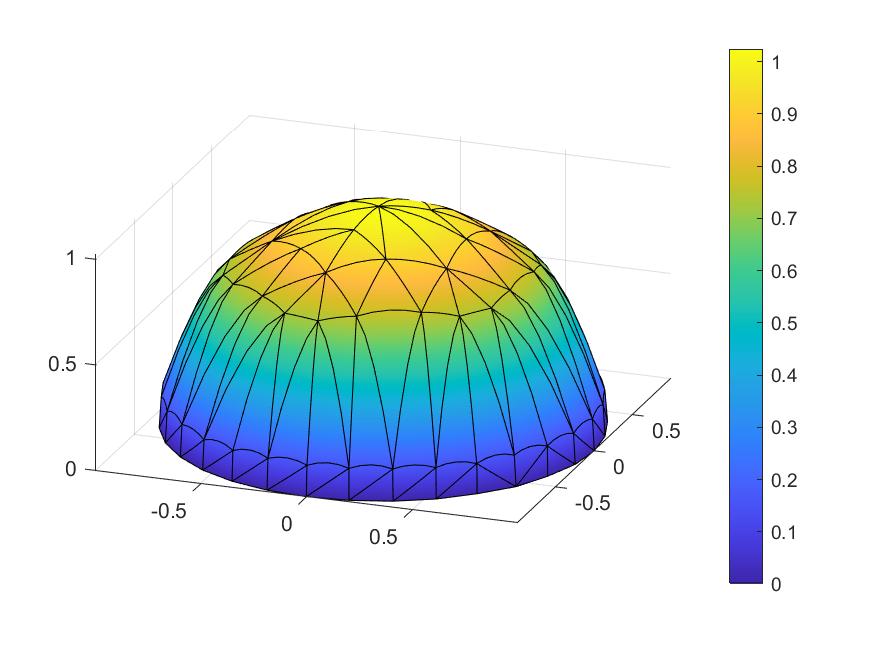}
\includegraphics[height = 3.5cm]{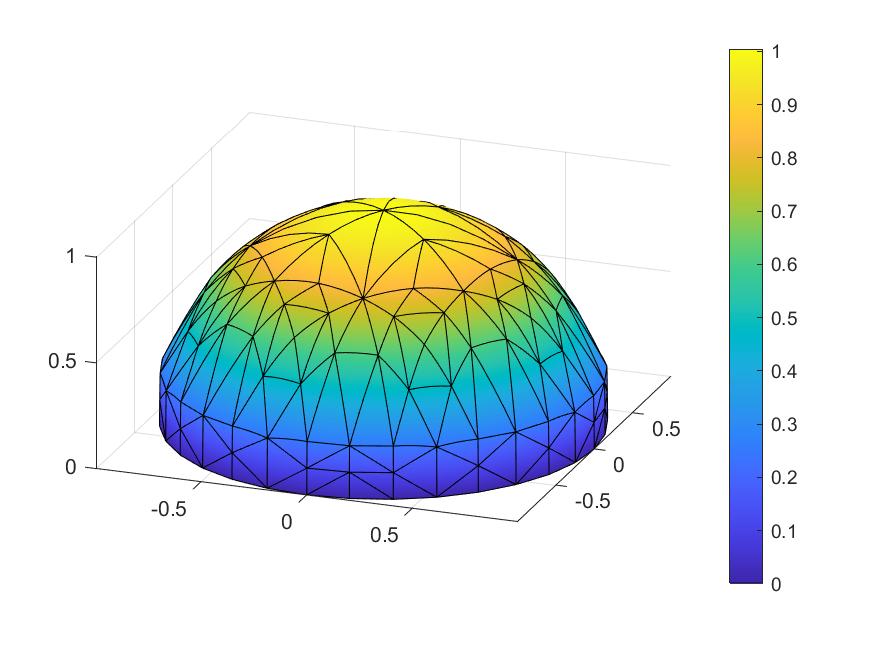}
\includegraphics[height = 3.5cm]{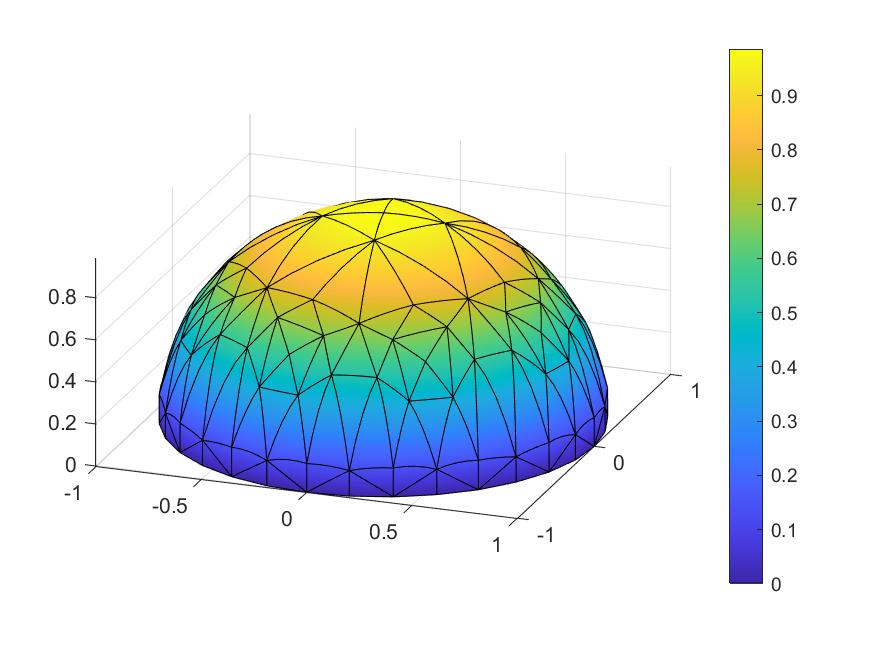}\
  \caption{Approximated optimal domains for \eqref{p1:problem:P_1_half}; with  initial domains $E^+_{0.9,0.9},E^+_{1,1},E^+_{1.1,1.1}$ (left to right), see Table \ref{p1:table:optimal_PN_half} (top) for the corresponding objective values} \label{p1:fig:optimal_P1N_half}
\end{center}
     \end{figure}

	 \begin{figure}[h]
\begin{center}
\includegraphics[height = 3.5cm]{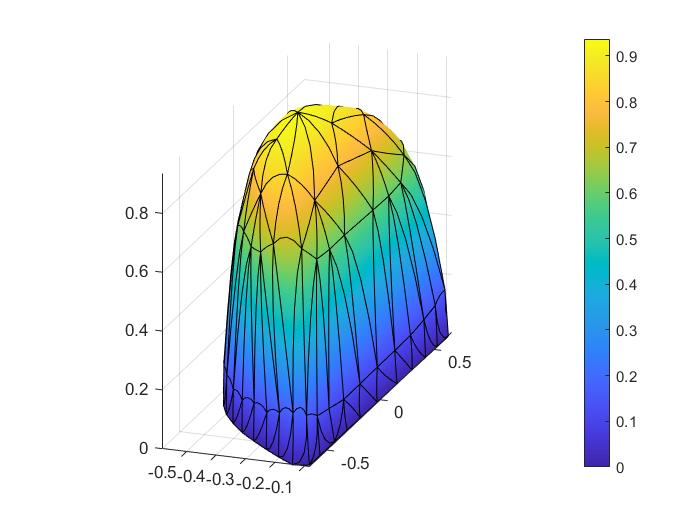}
\includegraphics[height = 3.5cm]{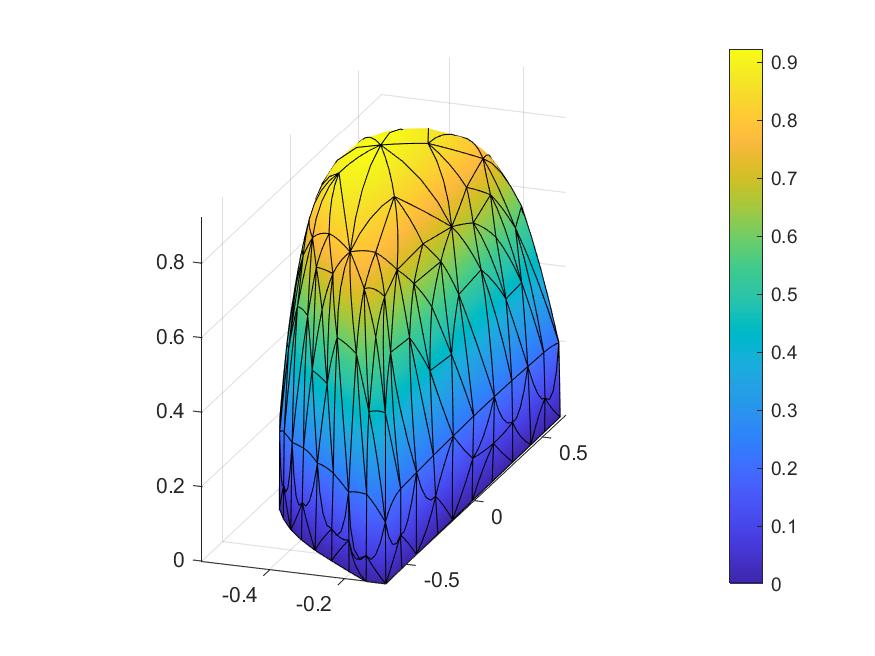}
\includegraphics[height = 3.5cm]{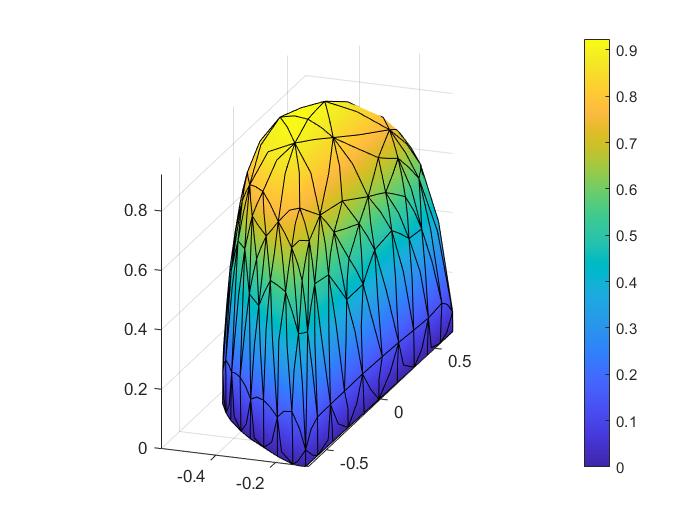}\\
  \caption{Approximated optimal domains for \eqref{p1:problem:P_2_half}; with  initial domains $E^+_{0.9,0.9},E^+_{1,1},E^+_{1.1,1.1}$(left to right), see Table \ref{p1:table:optimal_PN_half} (bottom) for the corresponding objective values} \label{p1:fig:optimal_P2N_half}
\end{center}
     \end{figure}

\section{Conclusion}\label{p1:sec:conclusion}

We have proposed two methods for the approximation of optimal convex domains in $\mathbb{R}^3$, which extend the results from \cite{keller} for rotational symmetric domains.

In Section \ref{p1:sec:non_conf} we proposed a relaxed definition of convexity for affine linear tetrahedralizations, which provides accurate results in the investigated shape optimization problems constrained by a Poisson equation. The definition of discrete convexity gives an approximation of the volume of a convex domain, but the regularity of the boundary of a convex set is lost, which makes it difficult to use this discretization for shape optimization problems in which boundary terms occur.  Recent results show that under further restrictions on the class of triangulations, problems such as the optimization of an eigenvalue arising in a problem of optimal insulation, see \cite{BB19,keller,BBN17}, can also be approximated. 
The convergence results for classes of convex domains are also available to other classes of domains, such as classes of domains which are uniform Lipschitz, or classes of domains with uniformly bounded perimeters, \cite[Theorem 6.3 and 6.11]{delzol}. Whether this provides stability in the shape optimization remains to be investigated.
 \\
In Section \ref{p1:sec:chd} an approximation of convex domains with isoparametric finite elements with a piecewise quadratic boundary was proposed, which allows for a conformal approximation of the convexity constraint. 
Here, we only optimized among symmetric convex domains, however the algorithm can be adapted to general convex domains without additional geometric constraints.
For the problems under consideration, both methods terminated within less than 150 iterations and the numerical experiments gave consistent results.
In both cases it was only possible to run the algorithms for a few different levels of mesh refinements, due to the high computational effort associated with the discretizations in higher dimensions. 
This is problematic, when considering more complex problems, for example when a volume constraint is added. For the approximation with discrete convex domains solving the deformation field as a solution of a stationary Stokes system to maintain the volume requires the use of Crouzeix-Raviart finite elements. This adds an additional discretization error and computational effort which makes the use of this method less practical. \\
For the isoparametric approximation including a volume constraint by using a Taylor Hood-like method as in \cite{bazilevs,bressan_juettler} gave promising results for simple experiments.

\addcontentsline{toc}{part}{References}

\printbibliography[title = References]

@article {rieger,
    AUTHOR = {Rieger, Janosch},
     TITLE = {A {G}alerkin approach to optimization in the space of convex
              and compact subsets of {$\Bbb R^d$}},
   JOURNAL = {J. Global Optim.},
  FJOURNAL = {Journal of Global Optimization. An International Journal
              Dealing with Theoretical and Computational Aspects of Seeking
              Global Optima and Their Applications in Science, Management
              and Engineering},
    VOLUME = {79},
      YEAR = {2021},
    NUMBER = {3},
     PAGES = {593--615},
      ISSN = {0925-5001,1573-2916},
   MRCLASS = {65K10 (49Q10 52A20 52B11 90C05)},
  MRNUMBER = {4223487},
MRREVIEWER = {Fr\'{e}d\'{e}rique\ Le Lou\"{e}r},
       DOI = {10.1007/s10898-020-00941-9},
       URL = {https://doi.org/10.1007/s10898-020-00941-9},
}

@book {so_shape_sensitivity_analysis,
    AUTHOR = {Sokolowski, Jan and Zol\'{e}sio, Jean-Paul},
     TITLE = {Introduction to shape optimization: Shape sensitivity analysis },
    SERIES = {Springer Series in Computational Mathematics},
    VOLUME = {16},
 PUBLISHER = {Springer-Verlag, Berlin},
      YEAR = {1992},
     PAGES = {ii+250},
      ISBN = {3-540-54177-2},
   MRCLASS = {49-02 (49Q15 73K40 73V25)},
  MRNUMBER = {1215733},
MRREVIEWER = {Fredi Tr\"{o}ltzsch},
       DOI = {10.1007/978-3-642-58106-9},
}

@article {schulz,
    AUTHOR = {Schulz, Volker H. and Siebenborn, Martin and Welker, Kathrin},
     TITLE = {Efficient {PDE} constrained shape optimization based on
              {S}teklov-{P}oincar\'{e}-type metrics},
   JOURNAL = {SIAM J. Optim.},
  FJOURNAL = {SIAM Journal on Optimization},
    VOLUME = {26},
      YEAR = {2016},
    NUMBER = {4},
     PAGES = {2800--2819},
      ISSN = {1052-6234},
   MRCLASS = {49Q10 (35J20 35Q93 57N25 65K99)},
  MRNUMBER = {3582826},
MRREVIEWER = {Ilaria Lucardesi},
       DOI = {10.1137/15M1029369},
}

@article {hiptmair,
    AUTHOR = {Hiptmair, R. and Paganini, A. and Sargheini, S.},
     TITLE = {Comparison of approximate shape gradients},
   JOURNAL = {BIT},
  FJOURNAL = {BIT. Numerical Mathematics},
    VOLUME = {55},
      YEAR = {2015},
    NUMBER = {2},
     PAGES = {459--485},
      ISSN = {0006-3835},
   MRCLASS = {65N30 (49Q12)},
  MRNUMBER = {3348199},
MRREVIEWER = {Aziz Belmiloudi},
       DOI = {10.1007/s10543-014-0515-z},
}

@book {delzol,
    AUTHOR = {Delfour, M. C. and Zol\'{e}sio, J.-P.},
     TITLE = {Shapes and geometries},
    SERIES = {Advances in Design and Control},
    VOLUME = {22},
   EDITION = {Second},
      NOTE = {Metrics, analysis, differential calculus, and optimization},
 PUBLISHER = {Society for Industrial and Applied Mathematics (SIAM),
              Philadelphia, PA},
      YEAR = {2011},
     PAGES = {xxiv+622},
      ISBN = {978-0-898719-36-9},
   MRCLASS = {49-02 (26-02 54-02)},
  MRNUMBER = {2731611},
MRREVIEWER = {Jan Soko\l owski},
       DOI = {10.1137/1.9780898719826},
}

@article{stellato2020osqp,
  author  = {Stellato, B. and Banjac, G. and Goulart, P. and Bemporad, A. and Boyd, S.},
  title   = {{OSQP}: an operator splitting solver for quadratic programs},
  journal = {Mathematical Programming Computation},
  year    = {2020},
  volume  = {12},
  number  = {4},
  pages   = {637--672},
  doi     = {10.1007/s12532-020-00179-2},
}

@book{geiger,
  title={Theorie und Numerik restringierter Optimierungsaufgaben},
  author={Geiger, Carl and Kanzow, Christian},
  year={2013},
  publisher={Springer-Verlag},
  doi = {10.1007/978-3-642-56004-0}
}

@article {bazilevs,
    AUTHOR = {Bazilevs, Y. and Beir\~{a}o da Veiga, L. and Cottrell, J. A. and
              Hughes, T. J. R. and Sangalli, G.},
     TITLE = {Isogeometric analysis: approximation, stability and error
              estimates for {$h$}-refined meshes},
   JOURNAL = {Math. Models Methods Appl. Sci.},
  FJOURNAL = {Mathematical Models and Methods in Applied Sciences},
    VOLUME = {16},
      YEAR = {2006},
    NUMBER = {7},
     PAGES = {1031--1090},
      ISSN = {0218-2025},
   MRCLASS = {65N30 (65N50 74G15 74S05 76M10 78M10)},
  MRNUMBER = {2250029},
MRREVIEWER = {Steve Wright},
       DOI = {10.1142/S0218202506001455},
}

@article {bressan_juettler,
    AUTHOR = {Bressan, Andrea and J\"{u}ttler, Bert},
     TITLE = {Inf-sup stability of isogeometric {T}aylor-{H}ood and sub-grid
              methods for the {S}tokes problem with hierarchical splines},
   JOURNAL = {IMA J. Numer. Anal.},
  FJOURNAL = {IMA Journal of Numerical Analysis},
    VOLUME = {38},
      YEAR = {2018},
    NUMBER = {2},
     PAGES = {955--975},
      ISSN = {0272-4979},
       DOI = {10.1093/imanum/drx031},
}

@book {schneider,
    AUTHOR = {Schneider, Rolf},
     TITLE = {Convex bodies: the {B}runn-{M}inkowski theory},
    SERIES = {Encyclopedia of Mathematics and its Applications},
    VOLUME = {151},
   EDITION = {expanded},
 PUBLISHER = {Cambridge University Press, Cambridge},
      YEAR = {2014},
     PAGES = {xxii+736},
      ISBN = {978-1-107-60101-7},      
  DOI = {10.1017/CBO9781139003858}
}

@article {bishop,
    AUTHOR = {Bishop, Richard L.},
     TITLE = {Infinitesimal convexity implies local convexity},
   JOURNAL = {Indiana Univ. Math. J.},
  FJOURNAL = {Indiana University Mathematics Journal},
    VOLUME = {24},
      YEAR = {1974/75},
     PAGES = {169--172},
      ISSN = {0022-2518},
       DOI = {10.1512/iumj.1974.24.24014},
}

@article{gibbon,
  title={GIBBON: the geometry and image-based bioengineering add-on},
  author={Moerman, Kevin M},
  journal={Journal of Open Source Software},
  volume={3},
  number={22},
  pages={506},
  year={2018},
  doi = {10.21105/joss.00506},
}

@article{keller,
  title={Numerical approximation of optimal convex and rotationally symmetric shapes for an eigenvalue problem arising in optimal insulation},
  author={Bartels, S{\"o}ren and Keller, Hedwig and Wachsmuth, Gerd},
  journal={Computers \& Mathematics with Applications},
  volume={119},
  pages={327--339},
  year={2022},
  publisher={Elsevier},
  doi = {https://doi.org/10.1016/j.camwa.2022.05.026}
}

@article{yang09,
  title={Shape optimization of stationary Navier--Stokes equation over classes of convex domains},
  author={Yang, Donghui},
  journal={Nonlinear Analysis: Theory, Methods \& Applications},
  volume={71},
  number={12},
  pages={6202--6211},
  year={2009},
  publisher={Elsevier},
       DOI = {10.1016/j.na.2009.06.013},
}

@article {latorre,
    AUTHOR = {La Torre, Davide and Rocca, Matteo},
     TITLE = {Almost everywhere convex functions on {${\bf R}^n$} and weak
              derivatives},
   JOURNAL = {Rend. Circ. Mat. Palermo (2)},
  FJOURNAL = {Rendiconti del Circolo Matematico di Palermo. Serie II},
    VOLUME = {50},
      YEAR = {2001},
    NUMBER = {3},
     PAGES = {405--414},
      ISSN = {0009-725X},
   MRCLASS = {49J52 (26B25 46N10)},
  MRNUMBER = {1871604},
MRREVIEWER = {Andrew C. Eberhard},
       DOI = {10.1007/BF02844421},
}

@article{BD03,
  title={Regularity of optimal convex shapes},
  author={Bucur, Dorin},
  journal={Journal of Convex Analysis},
  volume={10},
  number={2},
  pages={501--516},
  year={2003},
  publisher={HELDERMANN VERLAG LANGER GRABEN 17, 32657 LEMGO, GERMANY},
  url = {https://www.heldermann.de/JCA/JCA10/jca1031.htm}
}

@article{antunes,
      AUTHOR = {Antunes, Pedro R. S. and Bogosel, Beniamin},
     TITLE = {Parametric shape optimization using the support function},
   JOURNAL = {Comput. Optim. Appl.},
  FJOURNAL = {Computational Optimization and Applications. An International
              Journal},
    VOLUME = {82},
      YEAR = {2022},
    NUMBER = {1},
     PAGES = {107--138},
      ISSN = {0926-6003},
   MRCLASS = {49Q10 (49M37)},
  MRNUMBER = {4405578},
MRREVIEWER = {Nunzia Gavitone},
       DOI = {10.1007/s10589-022-00360-4},
}

@article{vangoethem,
  title={Variational problems on classes of convex domains},
  author={Van Goethem, Nicolas},
  journal={Communications in Applied Analysis 8.3},
  year={2004},
  pages = {353--371},
 doi = {10.48550/arXiv.math/0703734}
  
}

@book{ciarlet,
  title={The finite element method for elliptic problems},
  author={Ciarlet, Philippe G},
  year={2002},
  publisher={SIAM},
       DOI = {10.1137/1.9780898719208}
}

@book{dziuk,
  title={Theorie und Numerik partieller Differentialgleichungen},
  author={Dziuk, Gerhard},
  year={2010},
  publisher={Walter de Gruyter},
       DOI = {10.1515/9783110214819},
}

@article{demlow,
    AUTHOR = {Demlow, Alan},
     TITLE = {Higher-order finite element methods and pointwise error
              estimates for elliptic problems on surfaces},
   JOURNAL = {SIAM J. Numer. Anal.},
  FJOURNAL = {SIAM Journal on Numerical Analysis},
    VOLUME = {47},
      YEAR = {2009},
    NUMBER = {2},
     PAGES = {805--827},
      ISSN = {0036-1429},
   MRCLASS = {65N30 (65N15)},
  MRNUMBER = {2485433},
MRREVIEWER = {Pascal Omnes},
       DOI = {10.1137/070708135},
}

@article{dziukelliot,
  title={Finite element methods for surface PDEs},
  author={Dziuk, Gerhard and Elliott, Charles M},
  journal={Acta Numerica},
  volume={22},
  pages={289},
  year={2013},
  publisher={Cambridge University Press},
       DOI = {10.1017/S0962492913000056},
}

@article {CL_nonconf,
    AUTHOR = {Carlier, G. and Lachand-Robert, T. and Maury, B.},
     TITLE = {A numerical approach to variational problems subject to              convexity constraint},
   JOURNAL = {Numer. Math.},
  FJOURNAL = {Numerische Mathematik},
    VOLUME = {88},
      YEAR = {2001},
    NUMBER = {2},
     PAGES = {299--318},
      ISSN = {0029-599X},
   MRCLASS = {49M25 (65K10)},
  MRNUMBER = {1826855},
MRREVIEWER = {J\'{a}n Lov\'{\i}\v{s}ek},
       DOI = {10.1007/PL00005446},
}

@article{convexhull,
  title={Minimizing within convex bodies using a convex hull method},
  author={Lachand-Robert, Thomas and Oudet, {\'E}douard},
  journal={SIAM Journal on Optimization},
  volume={16},
  number={2},
  pages={368--379},
  year={2005},
  publisher={SIAM},
       URL = {https://doi.org/10.1137/040608039},
}

@article {BG97,
    AUTHOR = {Buttazzo, Giuseppe and Guasoni, Paolo},
     TITLE = {Shape optimization problems over classes of convex domains},
   JOURNAL = {J. Convex Anal.},
  FJOURNAL = {Journal of Convex Analysis},
    VOLUME = {4},
      YEAR = {1997},
    NUMBER = {2},
     PAGES = {343--351},
   MRCLASS = {49Q10 (49J10)},
  MRNUMBER = {1613491},
MRREVIEWER = {Ma\"{\i}tine Bergounioux},
  URL = { http://cvgmt.sns.it/paper/1433/ },
}

@book{veodf,
  title={Variation et optimisation de formes: une analyse g{\'e}om{\'e}trique},
  author={Henrot, Antoine and Pierre, Michel},
  volume={48},
  year={2006},
  publisher={Springer Science \& Business Media},
       DOI = {10.1007/3-540-37689-5},
}

@article{wachsmuth,
  title={Conforming approximation of convex functions with the finite element method},
  author={Wachsmuth, Gerd},
  journal={Numerische Mathematik},
  volume={137},
  number={3},
  pages={741--772},
  year={2017},
  publisher={Springer},
       DOI = {10.1007/s00211-017-0884-8},
}

@book{pde,
  title={Numerical approximation of partial differential equations},
  author={Bartels, S\"oren},
  volume={64},
  year={2016},
  publisher={Springer},
  DOI = {10.1007/978-3-319-32354-1}
}

@article{aguilera,
  title={On convex functions and the finite element method},
  author={Aguilera, N{\'e}stor E and Morin, Pedro},
  journal={SIAM Journal on Numerical Analysis},
  volume={47},
  number={4},
  pages={3139--3157},
  year={2009},
  publisher={SIAM},
       DOI = {10.1137/080720917},
}

@article{BBN17,
  title={Symmetry breaking for a problem in optimal insulation},
  author={Bucur, Dorin and Buttazzo, Giuseppe and Nitsch, Carlo},
  journal={Journal de Math{\'e}matiques Pures et Appliqu{\'e}es},
  volume={107},
  number={4},
  pages={451--463},
  year={2017},
  publisher={Elsevier},
  
       DOI = {10.1016/j.matpur.2016.07.006},
}

@article{chone,
  author={Chon{\'e}, Philippe and Le Meur, Herv{\'e} VJ},
  TITLE = {Non-convergence result for conformal approximation of
              variational problems subject to a convexity constraint},
   JOURNAL = {Numer. Funct. Anal. Optim.},
  FJOURNAL = {Numerical Functional Analysis and Optimization. An
              International Journal},
    VOLUME = {22},
      YEAR = {2001},
    NUMBER = {5-6},
     PAGES = {529--547},
       DOI = {10.1081/NFA-100105306}
}

@book{variationalmethods,
  title={Variational methods in shape optimization problems},
  author={Bucur, Dorin and Buttazzo, Giuseppe},
  volume={65},
  year={2004},
  publisher={Springer-Progress in Nonlinear Differential Equations and Their Applications},
  doi = {https://doi.org/10.1007/b137163}
}

@article {BB19,
     TITLE = {Numerical solution of a nonlinear eigenvalue problem arising
              in optimal insulation},
    AUTHOR = {Bartels, S\"oren and Buttazzo, Giuseppe},
   JOURNAL = {Interfaces Free Bound.},
    VOLUME = {21},
      YEAR = {2019},
    NUMBER = {1},
    
       DOI = {10.4171/IFB/414},
     PAGES = {1--19}
}

@article{BW18,
  title={Numerical approximation of optimal convex shapes},
  author={Bartels, Sören and Wachsmuth, Gerd},
  journal={SIAM Journal on Scientific Computing},
  volume={42},
  number={2},
  pages={A1226--A1244},
  year={2020},
  publisher={SIAM},
  doi = {10.1137/19M1256853}
}

@article {BG,
     TITLE = {Shape optimization problems with {R}obin conditions on the
              free boundary},
    AUTHOR = {Bucur, Dorin and Giacomini, Alessandro},
   JOURNAL = {Ann. Inst. H. Poincar\'{e} Anal. Non Lin\'{e}aire},
  FJOURNAL = {Annales de l'Institut Henri Poincar\'{e}. Analyse Non Lin\'{e}aire},
    VOLUME = {33},
      YEAR = {2016},
    NUMBER = {6},
     PAGES = {1539--1568},
       DOI = {10.1016/j.anihpc.2015.07.001}
}

\end{document}